\newtheorem{definition}{Definition}[section]
\newtheorem{theorem}{Theorem}[section]
\newtheorem{lemma}{Lemma}[section]
\numberwithin{equation}{section}
\title{Toeplitz Operators on Fock Space $F_{\alpha}^{\infty}$}
\author{Sui Huang}
\address{School of Mathematical Sciences\\Chongqing Normal University\\Chongqing 401331\\ China}
\email{huangsui2@163.com}
\date{\today}
\subjclass{}
\keywords{ Fock space; Fock-Carleson measure; Toeplitz operator}
\begin{document}
	
	\begin{abstract}
		In this paper, we study  necessary and sufficient conditions for a positive
  Borel measure $\mu$ on the complex space  $\mathbb{C}$ to
  be a $(\infty,q)$ or $(p,\infty)$ (vanishing) Fock-Carleson measure
  through its Berezin transform.
  Then we discuss boundedness and compactness of the Toeplitz operator $T_{\mu}$
  with a positive Borel measure  $\mu$ as symbol on Fock space $F_{\alpha}^{\infty}$.
  Furthermore, we charaterize these properties of the Toeplitz operator  $T_{\varphi}$ with a symbol
  $\varphi$ which is in $BMO$.
	\end{abstract}
	\maketitle
\section{Introduction}
For any $\alpha>0$, let $d\lambda_\alpha(z)=\frac{\alpha}{\pi}e^{-\alpha|z|^2 }\text{d} A(z)$ be the Gaussian measure on the complex plane $\mathbb{C}$,
where $dA=dxdy$ is the area measure on $\mathbb{C}$.
When $0<p<\infty$, we use $L_{\alpha}^{p}$ to denote the function space consisting of Lebesgue measurable functions $f$ that satisfy
$\frac{p\alpha}{2\pi}\int_{\mathbb{C}}|f(z)e^{-\frac{\alpha|z|^2}{2}}|^p\text{d}A(w)<+\infty$ .
Define the norm of $f$ as
$$||f||_{p,\alpha}=[\frac{p\alpha}{2\pi}\int_{\mathbb{C}}|f(z)e^{-\frac{\alpha|z|^2}{2}}|^p\text{d}A(z)]^{\frac{1}{p}}.$$
When $p=+\infty$,$$L_{\alpha}^{\infty}=\{f|f(z)e^{-\frac{\alpha|z|^2}{2}}\in L^{\infty}(\mathbb{C},\text{d}A)\}.$$
If $f\in L_{\alpha}^{\infty}$, it's norm is defined as
$||f||_{\infty,\alpha}$=esssup$\{|f(z)e^{-\frac{\alpha|z|^2}{2}}|:~z\in \mathbb{C}\}$.
 Then $(L_{\alpha}^{p},||\cdot||_{p,\alpha})$ is a Banach space for $1\le p \le\infty$,
please see [1] for details.
\par
The Fock space $F_{\alpha}^{p}$ is a closed subspace of $L_{\alpha}^{p}$ consisting of all entire functions on $\mathbb{C}$,
 and $F_{\alpha}^{p}$ is a Banach space with the norm  $||\cdot||_{p,\alpha}$.
 In particular, $F_{\alpha}^{2}$ is a Hilbert space with
the inner product
$$<f,g>=\int_{\mathbb{C}}f(z)\overline{g(z)}\text{d}\lambda_{\alpha}(z)$$
for any $f,g\in F_{\alpha}^{2}$.
 The reproducing kernel of $F_{\alpha}^{2}$ is $K_{z}(\omega)=e^{\alpha\bar{z}\omega}$,
 and the orthogonal projection $P$ from $L_{\alpha}^{2}$ to $F_{\alpha}^{2}$ is defined
as
$$Pf(z)=<f,K_{z}>=\int_{\mathbb{C}}f(\omega)\overline{K_{z}(\omega)}\text{d}\lambda_{\alpha}(\omega).$$
For any $f\in L_{\alpha}^{2}$,\
$$Pf(z)=f(z)=<f,K_{z}>.$$
Let $k_z(\omega)=\frac{K_{z}(\omega)}{\sqrt[]{K_{z}(z)}}$ be the normalized kernel,
it is well known that $||k_z(\omega)||_{p,\alpha}=1$ when $1\leq p\leq +\infty$.
\par
Let $\varphi$ be a complex function  satisfying $\varphi K_{a}\in L_{\alpha}^{2}$.
We can define the Toeplitz operator with the symbol $\varphi$
\begin{align}\nonumber
  T_{\varphi}f(z)
  &=P(\varphi f)(z)\\
  &=\int_{\mathbb{C}}\varphi(\omega)f(\omega)\overline{K_{z}(\omega)}\text{d}\lambda_{\alpha}(\omega).\nonumber
\end{align}
Let $\mu$ be a positive Borel measure on $\mathbb{C}$. If
$$\int_{\mathbb{C}}|K_{z}(\omega)|^2e^{-\alpha|\omega|^2}\text{d}\mu(\omega)<+\infty$$
for any $z\in \mathbb{C}$, we say that $\mu$ satisfies condition (M).
\ Since $|K_{z}(\omega)|^2=|K_{2z}(\omega)|$, the above condition equates to
$$\int_{\mathbb{C}}|K_{z}(\omega)|e^{-\alpha|\omega|^2}\text{d}\mu(\omega)<+\infty.$$
We define the Toeplitz operator $T_{\mu}$ on $F_{\alpha}^2$ with the symbol  $\mu$ as
$$T_{\mu}f(z)=\frac{\alpha}{\pi}\int_{\mathbb{C}}f(\omega)\overline{K_{z}(\omega)}e^{-\alpha|\omega|^2}\text{d}\mu(\omega).$$
Extending the  definition of the Toeplitz operator to  $F_{\alpha}^{p}$  as following:
$$T_{\mu}f(z)=\frac{\alpha}{\pi}\int_{\mathbb{C}}f(\omega)\overline{K_{z}(\omega)}e^{-\alpha|\omega|^2}\text{d}\mu(\omega).$$
If $\text{d}\mu=\varphi\text{d}A$, then
\begin{align}\nonumber
  T_{\varphi}f(z)
  &=\frac{\alpha}{\pi}\int_{\mathbb{C}}f(\omega)\varphi(\omega)\overline{K_{z}(\omega)}\text{d}\lambda_{\alpha}(\omega)\\\nonumber
  &=\frac{\alpha}{\pi}\int_{\mathbb{C}}f(\omega)\overline{K_{z}(\omega)}e^{-\alpha|\omega|^2}\varphi(\omega)\text{d}A(\omega)\\\nonumber
  &=T_{\mu}f(z).
  \nonumber
\end{align}\nonumber
%相对严格的Toplitz算子的定义
Compared with definition of the Toeplitz operator on $F_{\alpha}^2$,
the definition of the Toeplitz operator on $F_{\alpha}^p$ is loosely.
 If we denote $K=\text{span}\{K_{z}:z\in \mathbb{C}\}$, then $K$ is dense in $F_{\alpha}^p(0<p<+\infty)$ which makes
 $T_{\varphi}$ and $T_{\mu}$  well-defined on $K$, so we can generalize $T_{\varphi}$ and $T_{\mu}$ to $F_{\alpha}^p$.

Let $\mu$ be a positive Borel measure on $\mathbb{C}$ which satisfies  condition (M), say
\begin{align}\nonumber
  \tilde{\mu}_{t}(z)
  &=\frac{\alpha}{\pi}\int_{\mathbb{C}}|k_{z}(\omega)e^{-\alpha|\omega|^2}|^t\text{d}A(\omega)\\\nonumber
  &=\frac{\alpha}{\pi}\int_{\mathbb{C}}e^{-\frac{\alpha t|z-\omega|^2}{2} }\text{d}\mu(\omega)
  \nonumber
\end{align}\nonumber
is the $t$-Berezin transform of $\mu$. When $t=2$, $\tilde{\mu}(z)$ is the Berezin transform of $\mu$. If
$\text{d}\mu=f\text{d}A$,
\begin{align}\nonumber
  \tilde{f}_{t}(z)
  &=\frac{\alpha}{\pi}\int_{\mathbb{C}}|K_{z}(\omega)e^{-\frac{\alpha|\omega|^2}{2}}|^tf(\omega)\text{d}A(\omega)\\
  &=\frac{\alpha}{\pi}\int_{\mathbb{C}}f(\omega)e^{-\frac{\alpha t|z-\omega|^2}{2}}\text{d}A(\omega)\\
  &=B_{\frac{\alpha t}{2}}f(z),
  \nonumber
\end{align}
where $B_{\alpha}f(z)=\tilde{f}(z)=\frac{\alpha}{\pi}\int_{\mathbb{C}}f(\omega) e^{-\alpha |z-\omega|^2}\text{d}A(\omega)$ is the Berezin transform of $f$.

Fock spaces are also called Segal-Bargmann spaces. The study of Toeplitz operators on the classical Fock spaces started in the 1980s
 and has been fruitful so far, see [1-5].
 The properties of Toeplitz operators on the generalized Fock spaces and Fock-Sobolev spaces have been the subjects of comprehensive research over many years, see [6-11].
 The Berezin transform of measures, functions, and operators is one of important tools
 in the studies of  properties of the Toeplitz operators.
Isralowitz and Zhu  introduced the Fock-Carleson measures on $\mathbb{C}$ in [12].
  With  the Berezin transform of  measures, they discussed the properties of the Fock-Carleson measure,
  and studied  boundedness, compactness and Schatten classes of the Toeplitz operator $T_{\mu }$ on $F_{\alpha}^2$.
  Hu and Lv defined $(p,q)$ Fock-Carleson measures in [13], characterizing  Berezin transform of those measures,
boundedness and compactness of $T_{\mu}$ from $F_{\alpha}^{p}$ to $F_{\alpha}^q$.
Furthermore, they extended their results to the generalized Fock spaces in [8].
Mengestle discussed  boundedness and compactness of~$T_{\mu}$~between~$F_{\alpha}^p$~and $F_{\alpha}^1$, $F_{\alpha}^{\infty}$ in [14] on the basis of the works of Hu and Lv.

Based on the studies of Hu and Lv,
our work in this paper is to characterize  necessary and sufficient conditions for a positive Borel measures $\mu$ to be a $(\infty,q)$, $(p,\infty)$ (vanishing) Fock-Carleson measure
 by its $t$-Berezin transform  in Sect.2 and Sect.3.
Furthermore, we study  boundedness and compactness of the Toeplitz operators $T_{\mu}$ with symbols $\mu$
 and $T_{\varphi}$ with symbols $\varphi$ in $BMO$ on $F_{\alpha}^{\infty}$ in Sect.4.

\par
During discussing of this paper, we will use the following definitions and theorems.\\
\textbf{Definition $1.1^{[1]}$}
\textit{Let $r>0$, $\{a_k\}_{k=1}^{+\infty}$ be a sequence in $\mathbb{C}$. If}
\par$(1)\ \mathbb{C}=\bigcup\limits_{k=1}^{+\infty}B(a_k,r)$;
\par$(2)\ \{B(a_k,r)\}_{k=1}^{+\infty}$\ \textit{are pairwise disjoint;}
\par$(3)$ \textit{For any  $\delta >0$,
$z$ belongs to at most $m$ sets in $\{B(a_k,r)\}_{k=1}^{+\infty}$, \\
then $\{a_k\}$ is said to be an $r$-lattice of $\mathbb{C}$.}\\
\textbf{Lemma $1.2$}\textsuperscript{[1]}\ \textit{Let $\alpha>0$, $p>0$.
If $f(z)$ is an entire function on $\mathbb{C}$, there exists a constant $C>0$ such that
$$|f(a)e^{-\frac{\alpha|a|^2}{2}}|^p\leqslant \frac{C}{r^2}\int_{B(a,r)}|f(z)e^{-\frac{\alpha|z|^2}{2}}|^p\text{d}A(z) $$
for all $r>0$ .}\\
\textbf{Lemma $1.3$}\textsuperscript{[8]}
\textit{Let $1\leqslant p < +\infty$ and $\mu$ be a positive Borel measure on $\mathbb{C}$. The following propositions are equivalent:}
\par$(1)\ \tilde{\mu}_t\in L^p( \mathbb{C},\text{d}A),\ t>0$;
\par$(2)\ \mu(B(z,\delta)) \in L^p( \mathbb{C},\text{d}A),\ \delta>0$;
\par$(3)\ \{ \mu(B(a_k,r))\}\in l^p,\ r>0$.\\
\textbf{Lemma $1.4$}\textsuperscript{[8]}\ \textit{Let $0<q<p<+\infty$ and $\mu$ be a positive Borel measure on $\mathbb{C}$,
$s=\frac{p}{q}$, $\frac{1}{s}+\frac{1}{s'}=1$.  The following propositions are equivalent:}
\par$(1)$ $ \mu $ is $(p,q)$\ \textit{ Fock-Carleson measure;}
\par$(2)$ $ \mu $ is $(p,q)$\  \textit{vanishing Fock-Carleson measure;}
\par$(3)$ $ \tilde{\mu}_t\in L^{s'}( \mathbb{C},\text{d}A)$, $t>0$;
\par$(4)$ $ \mu(B(z,\delta))\in L^{s'}( \mathbb{C},\text{d}A)$, $\delta>0$;
\par$(5)$ $ \mu(B(z,\delta)) \in l^{s'}$, $r>0$.
\par
 Let $BMO$ be the bounded average oscillation function space.\\
\textbf{Lemma $1.5$}\textsuperscript{[1]}
 \textit{Let $f\in BMO$,~$\alpha>0$,~$\beta>0$.~$\widetilde{f_{\alpha}} \in C_0( \mathbb{C})$ if and only if $\widetilde{f_{\beta}}\in C_0( \mathbb{C})$.}
\par In this paper, we use the symbol $C$ to denote positive constants.

\section{$(\infty,q)$ Fock-Carleson Measure }
We discuss necessary and sufficient  conditions for a positive Borel measure $\mu$ on the complex plane $\mathbb{C}$ to be a $(\infty,q)$ (vanishing) Fock-Carleson measure in this section.

\begin{definition} \textit{Let $\mu$ be a positive Borel measure on $\mathbb{C}$.
Define $$L_{\alpha}^p(\text{d}\mu)=\{f|\int_{\mathbb{C}}|f(z)e^{-
\frac{\alpha|z|^2}{2}}|^p\text{d}\mu<+\infty\}$$
is the function space consisting of Lebesgue measurable functions $ f $ on $ \mathbb{C}$ satisfying the above condition.
For $ 1 \leqslant p < +\infty $, the norm of $f \in L_{\alpha}^{p}(\text{d}\mu)$ can be defined as following
$$||f||_{p,\mu}=[\int_{\mathbb{C}}|f(z)e^{-\frac{\alpha|z|^2}{2}}|^p\text{d}\mu]^{\frac{1}{p}}.$$
Then $(L_{\alpha}^{p}(\text{d}\mu),||\cdot||_{p,\mu})$ is a Banach space.
 When $p=+\infty$,
$L_{\alpha}^{\infty}(\text{d}\mu)$ denotes the space consisting of the essentially bounded measurable functions on $\mathbb{C}$ about $\mu$.
If $f\in L_{\alpha}^{\infty}(\text{d}\mu)$, it's norm is defined as
$$||f||_{\infty,\mu}=\inf \limits_{E_0\subset C \atop \mu(E_0)=0}
(\sup\limits_{C-E_0}|f(z)e^{- \frac{\alpha|z|^2}{2}}|)=\text{esssup}_{z\in \mathbb{C}}|f(z)e^{-\frac{\alpha|z|^2}{2}}|.$$
Under the  norm, $L_{\alpha}^{\infty}(\text{d}\mu)$ is also a Banach space.}
\end{definition}

\begin{definition}\textit{Let $0<p,q<\infty$ and $\mu$ be a positive Borel measure on the complex plane $\mathbb{C}$.
If $i:F_{\alpha}^{p}\longrightarrow L_{\alpha}^{q}(\text{d}\mu)$ is bounded, that is, there exists a constant  $C>0$ for any $f\in F_{\alpha}^{p}$ such that
$$[\int_{\mathbb{C}}|f(z)e^{-\frac{\alpha|z|^2}{2}}|^q\text{d}\mu(z)]^{\frac{1}{q}}\leqslant C||f||_{p,\alpha},$$
 we call $\mu$ a $(p,q)$ Fock-Carleson measure.}

 \textit{When $p=+\infty,\ f\in F_{\alpha}^{\infty}$
$$[\int_{\mathbb{C}}|f(z)e^{-\frac{\alpha|z|^2}{2}}|^q\text{d}\mu(z)]^{\frac{1}{q}}\leqslant C||f||_{\infty,\alpha},$$
 we call $\mu$ a $(\infty,q)$ Fock-Carleson measure.}

 \textit{When $q=+\infty$, $f\in F_{\alpha}^p$
$$||f||_{\infty,\mu}\leqslant C||f||_{p,\alpha},$$
 we call $\mu$ a $(p,\infty)$ Fock-Carleson measure.}
\end{definition}

\begin{definition}\ \textit{ Let $0<q<\infty$ and $\mu$ be a positive Borel measure on the complex plane $\mathbb{C}$.
If
$$\lim_{n\to+\infty}\int_{\mathbb{C}}|f_{n}(z)e^{-\frac{\alpha|z|^2}{2}}|^q\text{d}\mu(z)=0,$$
we call $\mu$  a $(\infty,q)$ vanishing Fock-Carleson measure,
 where $\{f_{n}\}_{n=1}^{+\infty}$ is a bounded sequence in $F_{\alpha}^{\infty}$ and converges  to $0$ uniformly on any compact subset of ~$\mathbb{C}$.
If $\{f_n\}_{n=1}^{+\infty}$ is a bounded sequence  in $F_{\alpha}^p$ and converges  to $0$ uniformly on any compact subset of ~$\mathbb{C}$, such that
$$\lim\limits_{n\to+\infty}||f_n||_{\infty,\mu}=0,$$
 we call $\mu$ a $(p,\infty)$ vanishing Fock-Carleson measure.}
\end{definition}

\begin{lemma}\textit{Let $\mu$ be a positive Borel measure on $\mathbb{C}$. If $f\in \displaystyle{\bigcup_{p\geqslant1}}F_{\alpha}^{p}$, then
$$\lim_{p\to+\infty}||f||_{p,\alpha}=||f||_{\infty,\alpha}.$$}
\end{lemma}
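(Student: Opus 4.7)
The plan is to show the two inequalities $\limsup_{p\to\infty}\|f\|_{p,\alpha}\le\|f\|_{\infty,\alpha}$ and $\liminf_{p\to\infty}\|f\|_{p,\alpha}\ge\|f\|_{\infty,\alpha}$ separately. Throughout set $g(z)=f(z)e^{-\alpha|z|^2/2}$. Since $f$ is entire, $g$ is continuous, so the essential supremum defining $\|f\|_{\infty,\alpha}$ coincides with the actual supremum $M:=\sup_{z\in\mathbb{C}}|g(z)|$. Because $f\in F_\alpha^{p_0}$ for some $p_0\ge 1$, Lemma $1.2$ immediately gives $M<\infty$, so everything in sight is finite (the trivial case $f\equiv 0$ can be dismissed at the start).

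For the upper bound, I would use the pointwise domination
\[
|g(z)|^p = |g(z)|^{p-p_0}\,|g(z)|^{p_0}\le M^{p-p_0}|g(z)|^{p_0}
\]
valid for all $p\ge p_0$. Multiplying by $\frac{p\alpha}{2\pi}$ and integrating gives
\[
\|f\|_{p,\alpha}^{p}\le \frac{p\alpha}{2\pi}M^{p-p_0}\int_{\mathbb{C}}|g(z)|^{p_0}\,\mathrm{d}A(z)=\frac{p}{p_0}M^{p-p_0}\|f\|_{p_0,\alpha}^{p_0}.
\]
Taking the $p$-th root and letting $p\to\infty$, the factors $(p/p_0)^{1/p}$ and $\|f\|_{p_0,\alpha}^{p_0/p}$ both tend to $1$, while $M^{1-p_0/p}\to M$, yielding $\limsup_p\|f\|_{p,\alpha}\le M$.

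For the lower bound, fix $\varepsilon>0$ and, using the definition of $M$, choose $a\in\mathbb{C}$ with $|g(a)|>M-\varepsilon$. Continuity of $g$ produces a radius $\delta>0$ such that $|g(z)|>M-2\varepsilon$ on $B(a,\delta)$. Then
\[
\|f\|_{p,\alpha}^{p}\ge \frac{p\alpha}{2\pi}\int_{B(a,\delta)}|g(z)|^{p}\,\mathrm{d}A(z)\ge \frac{p\alpha\delta^{2}}{2}(M-2\varepsilon)^{p},
\]
so $\|f\|_{p,\alpha}\ge (M-2\varepsilon)\bigl(\tfrac{p\alpha\delta^{2}}{2}\bigr)^{1/p}$. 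The last factor tends to $1$ as $p\to\infty$, hence $\liminf_p\|f\|_{p,\alpha}\ge M-2\varepsilon$, and letting $\varepsilon\downarrow 0$ finishes the proof.

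There is no real obstacle; the only subtlety worth flagging is that one cannot apply the standard finite-measure argument for $L^p\to L^\infty$ since $\mathrm{d}A$ has infinite mass on $\mathbb{C}$. The role of the normalizing prefactor $\tfrac{p\alpha}{2\pi}$ and the Gaussian weight absorbed into $g$ is exactly to tame this, as the upper-bound computation makes explicit.
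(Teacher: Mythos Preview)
Your proof is correct. The upper bound is essentially identical to the paper's: you split $|g|^p=|g|^{p-p_0}|g|^{p_0}$ and bound the first factor by $M^{p-p_0}$, exactly as the paper does (they fix $p_0=2$, which tacitly assumes $f\in F_\alpha^2$; your version with the given $p_0$ is actually cleaner since the hypothesis only guarantees $f\in F_\alpha^{p_0}$ for some $p_0\ge 1$). The lower bound is where you differ. The paper invokes the standard pointwise estimate $|f(z)e^{-\alpha|z|^2/2}|\le\|f\|_{p,\alpha}$ (a consequence of Lemma~1.2, equivalently of the embedding $F_\alpha^p\hookrightarrow F_\alpha^\infty$) to obtain $\|f\|_{\infty,\alpha}\le\|f\|_{p,\alpha}$ for every $p$ in one line, whereas you argue by continuity on a small disk. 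Your route is more self-contained and would work for any continuous $g$ without appealing to Fock-space structure; the paper's is shorter because that pointwise estimate is already on the shelf.
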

\begin{proof}
 When~$1\leqslant p<q<+\infty$, we know that $F_{\alpha}^{p}\subset F_{\alpha}^{q} \subset F_{\alpha}^{\infty}$ by Theorem 2.7 in [1].
If there is a $p_0\geqslant 1$ such that $f\in F_{\alpha}^{p_0}$, then $f\in F_{\alpha}^p$~for any~$p>p_0$ and
$$|f(z)e^{-\frac{\alpha|z|^2}{2}}|\leqslant||f||_{p,\alpha},$$
therefore
$$||f||_{\infty,\alpha}\leqslant \varliminf_{p\to+\infty}||f||_{p,\alpha}.$$
\par
Let $p>2$,
\begin{align}\nonumber
 ||f||_{\infty,\alpha}^p
 &=\frac{p\alpha}{2\pi}\int_{\mathbb{C}}|f(z)e^{-\frac{\alpha|z|^2}{2}}|^p\text{d}A(z)\\
  &=\frac{p}{2}\int_{\mathbb{C}}|f(z)e^{-\frac{\alpha|z|^2}{2}}|^{p-2}|f(z)|^2\text{d}\lambda_{\alpha}(\omega).\\
  &\leqslant\frac{p}{2}||f||_{\infty,\alpha}^{p-2}||f||_{2,\alpha}^2,
  \nonumber
\end{align}
then
$$||f||_{p,\alpha}\leqslant[\frac{p}{2}||f||_{2,\alpha}^{2}]^{   \frac{1}{p}}||f||_{\infty,\alpha}^{\frac{p-2}{p}},$$
and
$$\varlimsup_{p\to+\infty}||f||_{p,\alpha}\leqslant||f||_{\infty,\alpha} .$$
Thus $\lim\limits_{p\to +\infty}||f||_{p,\alpha}=||f||_{\infty,\alpha}$.\ $\qedsymbol$
\end{proof}
\begin{theorem}
 \textit{Let $\mu$ be a positive Borel measure on $\mathbb{C}$.
 $\mu$ is a $(\infty,q)$ Fock-carleson measure if and only if $\mu$ is a $(p,q)$\ Fock-Carleson measure $(p\geqslant1)$.}
\end{theorem}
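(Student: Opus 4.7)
The plan is to prove each direction of the equivalence separately.

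For the forward direction $(\infty,q) \Rightarrow (p,q)$, I rely on Lemma 1.2. For $f \in F_\alpha^p$ with $p \geq 1$, the sub-mean value estimate combined with the definition of $\|\cdot\|_{p,\alpha}$ yields the pointwise bound $|f(z)|e^{-\alpha|z|^2/2} \leq C\|f\|_{p,\alpha}$, and hence $\|f\|_{\infty,\alpha} \leq C\|f\|_{p,\alpha}$. Chaining this with the assumed $(\infty,q)$ Carleson inequality $\|f\|_{q,\mu} \leq C'\|f\|_{\infty,\alpha}$ yields $\|f\|_{q,\mu} \leq CC'\|f\|_{p,\alpha}$, which is precisely the $(p,q)$ Carleson condition.

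For the reverse direction $(p,q) \Rightarrow (\infty,q)$, the core obstacle is that an $f \in F_\alpha^\infty$ need not lie in $F_\alpha^p$, so the hypothesis cannot be applied directly to $f$. I would approximate $f$ by its dilations $f_r(z) := f(rz)$ for $0 < r < 1$. The essential features are: $|f_r(z)|e^{-\alpha|z|^2/2} \leq \|f\|_{\infty,\alpha}\, e^{-\alpha(1-r^2)|z|^2/2}$, giving Gaussian decay that places $f_r \in F_\alpha^p$ for every finite $p$; $\|f_r\|_{\infty,\alpha} \leq \|f\|_{\infty,\alpha}$; and $f_r(z) \to f(z)$ pointwise as $r \to 1^-$. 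Applying the $(p,q)$ hypothesis to $f_r$ and passing $p \to \infty$ using Lemma 2.1 (which gives $\|f_r\|_{p,\alpha} \to \|f_r\|_{\infty,\alpha}$) yields $\|f_r\|_{q,\mu} \leq C\|f\|_{\infty,\alpha}$. A final application of Fatou's lemma to the integrand $|f_r(z)e^{-\alpha|z|^2/2}|^q$ as $r \to 1^-$ then produces
\[
\|f\|_{q,\mu}^q \leq \liminf_{r \to 1^-} \|f_r\|_{q,\mu}^q \leq C^q \|f\|_{\infty,\alpha}^q.
\]

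The main obstacle is coordinating the two limits $p \to \infty$ and $r \to 1^-$ while retaining control of the Carleson constant across $p$; this effectively requires the $(p,q)$ Carleson bounds to hold with sufficient uniformity in large $p$. An alternative and possibly cleaner route is to invoke Lemma 1.4 to translate $(p,q)$ Carleson (for $p > q$) into $\tilde{\mu}_t \in L^{s'}$ with $s' = p/(p-q)$, and then let $p \to \infty$ so that $s' \to 1$, obtaining a limiting integrability condition on $\tilde{\mu}_t$ that one matches to the $(\infty,q)$ Carleson characterization obtained by testing on the normalized kernels $k_a$.
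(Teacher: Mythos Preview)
Your forward direction is exactly the paper's: both use the inclusion $F_\alpha^p\subset F_\alpha^\infty$ with $\|f\|_{\infty,\alpha}\le C\|f\|_{p,\alpha}$ (the paper quotes this directly; you derive it from Lemma~1.2).

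For the reverse direction the paper is considerably more cavalier than you are. It simply writes, for $f\in F_\alpha^p$,
\[
\Big(\int_{\mathbb{C}}|f(z)e^{-\alpha|z|^2/2}|^{q}\,d\mu\Big)^{1/q}\le C\|f\|_{p,\alpha},
\]
invokes Lemma~2.1 ($\lim_{p\to\infty}\|f\|_{p,\alpha}=\|f\|_{\infty,\alpha}$), and concludes. It never addresses the point you correctly flag, namely that a general $f\in F_\alpha^\infty$ need not lie in any $F_\alpha^p$, and it treats the Carleson constant $C$ as independent of $p$ without comment. Your dilation device $f_r(z)=f(rz)$ together with Fatou's lemma is a genuine improvement: it produces functions that do lie in $\bigcup_{p\ge1}F_\alpha^p$ and lets you recover arbitrary $f\in F_\alpha^\infty$ in the limit. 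So your argument is in the same spirit as the paper's but fills a gap the paper leaves open.

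The residual issue you identify---uniformity of the $(p,q)$ constant in $p$---is real and is present in the paper's proof as well; the paper simply does not mention it. Your alternative route through Lemma~1.4 does not resolve it either: knowing $\tilde\mu_t\in L^{s'}$ for every $s'>1$ does not force $\tilde\mu_t\in L^1$, which (by the paper's Theorem~2.2) is the actual characterization of $(\infty,q)$. If you want a clean argument, the easiest fix is to note that once $\mu$ is $(p,q)$ for a \emph{single} $p>q$, Lemma~1.4 gives $\tilde\mu_t\in L^{s'}$, hence $\{\mu(B(a_k,r))\}\in\ell^{s'}\subset\ell^\infty$; but to reach $\ell^1$ one really needs quantitative control as $p\to\infty$, which amounts to assuming the uniform bound.
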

\begin{proof}Since $F_{\alpha}^p\subset F_{\alpha}^{\infty}$, $||f||_{\infty,\alpha}\leqslant||f||_{p,\alpha}$ for any $f\in F_{\alpha}^p$.
If $\mu$ is a $(\infty,q)$ Fock-Carleson measure, then there exists a constant $C>0$ for $f\in F_{\alpha}^{p}$ such that
%当$f\in F_{\alpha}^P,||f||_{\infty,\alpha}\leqslant C||f||_{p,\alpha}$
$$(\int_{\mathbb{C}}|f(z)e^{-\frac{\alpha|z|^2}{2}}|^{q}\text{d}\mu)^{\frac{1}{q}}\leqslant C||f||_{\infty,\alpha} \leqslant C||f||_{p,\alpha}.$$
So $\mu$ is a $(p,q)$\ Fock-Carleson measure.
%由Fock空间$F_{\alpha}^{p}\subset F_{\alpha}^{\infty}(p>0)$
%$$||f||_{\infty,\alpha}\leqslant||f||_{p,\alpha}$$
%因此$$(\int_{\mathbb{C}}|f(z)e^{-\frac{\alpha|z|^2}{2}}|^{q}\text{d}\mu)^{\frac{1}{q}}\leqslant C||f||_{p,\alpha}$$
%$\mu$是$(p,q)$Fock-Carleson测度.

Conversely, if $\mu$ is a $(p,q)$\ Fock-Carleson measure for any $p\geqslant1$, then
$$(\int_{\mathbb{C}}|f(z)e^{-\frac{\alpha|z|^2}{2}}|^{q}\text{d}\mu(z))^{\frac{1}{q}}\leqslant C||f||_{p,\alpha}$$
for all $f\in F_{\alpha}^{p}$. From Lemma 2.4,
$$\lim_{p\to+\infty}||f||_{p,\alpha}=||f||_{\infty,\alpha} ,$$
therefore
$$[\int_{\mathbb{C}}|f(z)e^{-\frac{\alpha|z|^2}{2}}|^{q}\text{d}\mu(z)]^{\frac{1}{q}}\leqslant C||f||_{\infty,\alpha}.$$
So $\mu$ is a $(\infty,q)$ Fock-Carleson measure.$\qedsymbol$
\end{proof}
%在接下来的定理中, 我们将使用复平面$\mathbb{C}$中的格. 设$r>0$, $\{a_{k}\}_{k=1}^{\infty}$是$\mathbb{C}$中的点列. 如果
%(1) $C=\cup_{k=1}^{+\infty}B(a_{k},r)$;
%(2) $\{B(a_{k},\frac{r}{2})\}_{k=1}^{+\infty}$两两互不相交;
%(3) 对任意$\delta>0$, $\mathbb{C}$中的点至多属于$m$个$B(a_{k},\delta)$,
%称$\{a_{k}\}_{k=1}^{\infty}$是$\mathbb{C}$的一个$r$-格.

\begin{theorem}Let $\mu$ be a positive Borel measure on $\mathbb{C}$, and $\{a_{k}\}_{k=1}^{\infty}$ be an $r$-lattice of~$\mathbb{C}$.
The following propositions are equivalent:
\par$(1)$ $\mu$ is $(\infty,q)$ Fock-Carleson measure;
\par$(2)$ $\tilde{\mu_{t}}\in L^1(\mathbb{C},dA)$, $t>0$;
\par$(3)$ $\mu(B(z,r))\in L^1(\mathbb{C},dA)$;
\par$(4)$ $\{\mu(B(a_{k},r))\}\in l^1$.
\end{theorem}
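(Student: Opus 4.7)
The plan is to dispatch the equivalences among (2), (3), (4) by a direct Fubini computation (or alternatively by Lemma 1.3 with exponent $p=1$) and to isolate the key reduction (1) $\Leftrightarrow$ $\mu(\mathbb{C})<\infty$. Indeed, Fubini gives
\[
\int_{\mathbb{C}}\tilde\mu_t(z)\,dA(z) = \frac{2}{t}\mu(\mathbb{C}), \qquad \int_{\mathbb{C}}\mu(B(z,r))\,dA(z) = \pi r^2\,\mu(\mathbb{C}),
\]
while the $r$-lattice covering together with finite multiplicity yield $\mu(\mathbb{C})\asymp \sum_k \mu(B(a_k,r))$; so (2), (3), (4) are three different encodings of the single condition $\mu(\mathbb{C})<\infty$, and it suffices to prove (1) $\Leftrightarrow$ $\mu(\mathbb{C})<\infty$.

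The implication $\mu(\mathbb{C})<\infty \Rightarrow$ (1) is immediate: for any $f\in F_\alpha^{\infty}$ the definition of the norm gives $|f(z)|e^{-\alpha|z|^2/2}\leq \|f\|_{\infty,\alpha}$ pointwise, whence $\int_{\mathbb{C}}|f(z)e^{-\alpha|z|^2/2}|^q\,d\mu(z)\leq \mu(\mathbb{C})\,\|f\|_{\infty,\alpha}^q$, so (1) holds with $C=\mu(\mathbb{C})^{1/q}$.

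The heart of the proof is the converse (1) $\Rightarrow$ $\mu(\mathbb{C})<\infty$. My plan is a Rademacher superposition of normalized kernels: for each $N$ and each sign vector $\epsilon=(\epsilon_k)\in\{\pm 1\}^{N}$, set $f_\epsilon=\sum_{k=1}^{N} \epsilon_k k_{a_k}$. The pointwise identity $|k_{a_k}(z)|e^{-\alpha|z|^2/2}=e^{-\alpha|z-a_k|^2/2}$ together with the separation/finite-multiplicity properties of the $r$-lattice make the Gaussian lattice sum $M:=\sup_z\sum_{k\geq 1}e^{-\alpha|z-a_k|^2/2}$ finite, so the triangle inequality yields $\|f_\epsilon\|_{\infty,\alpha}\leq M$ uniformly in $\epsilon$ and $N$. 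Feeding this into (1) and averaging over $\epsilon$ by Fubini, then applying Khintchine's inequality pointwise in $z$, produces a constant $c_q>0$ with
\[
c_q \int_{\mathbb{C}}\Bigl(\sum_{k=1}^{N} e^{-\alpha|z-a_k|^2}\Bigr)^{q/2}\,d\mu(z)\leq (CM)^{q}
\]
uniformly in $N$. The covering property of the lattice forces $\sum_{k\geq 1}e^{-\alpha|z-a_k|^2}\geq e^{-\alpha r^2}>0$ for every $z\in\mathbb{C}$, and monotone convergence as $N\to\infty$ then yields $\mu(\mathbb{C})<\infty$.

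The main obstacle is precisely this last step. Testing with a single normalized kernel $f=k_w$ only gives the pointwise bound $\tilde\mu_q(w)\leq C'$, i.e.\ $\tilde\mu_q\in L^{\infty}$, which is strictly weaker than the $L^1$ conclusion we need; one is therefore forced to use many kernels simultaneously. Random signs are what keep the $F_\alpha^{\infty}$-norm of the superposition uniformly bounded (via triangle inequality plus the lattice-sum bound $M$), while Khintchine's inequality is what converts the $\ell^{\infty}$ sign-control into the $\ell^2$-style pointwise lower bound $\mathbb{E}|f_\epsilon(z)|^q\gtrsim (\sum_k |k_{a_k}(z)|^2)^{q/2}$ that captures the full mass of $\mu$.
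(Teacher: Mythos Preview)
Your proof is correct, and in the crucial direction it is sharper than the paper's own argument.

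For the equivalences among (2), (3), (4), the paper chains inequalities $(2)\Rightarrow(3)\Rightarrow(4)$ (via the pointwise bound $\mu(B(z,r))\le e^{\alpha tr^2/2}\tilde\mu_t(z)$ and a covering estimate) and then proves $(4)\Rightarrow(1)$ exactly as you do. Your Fubini identities $\int_{\mathbb{C}}\tilde\mu_t\,dA=\tfrac{2}{t}\mu(\mathbb{C})$ and $\int_{\mathbb{C}}\mu(B(\cdot,r))\,dA=\pi r^2\mu(\mathbb{C})$ are a cleaner route to the same place: all three conditions encode $\mu(\mathbb{C})<\infty$.

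The real divergence is in $(1)\Rightarrow(2)$. The paper tests the Carleson inequality with $f=K_z$ and arrives at
\[
\tilde\mu_q(z)\ \le\ C\,\tfrac{\alpha}{\pi}\,e^{-\alpha q|z|^2/2}\,\|K_z\|_{\infty,\alpha}^{\,q},
\]
then invokes Lemma~1.3 to conclude $\tilde\mu_q\in L^1$. But $\|K_z\|_{\infty,\alpha}=e^{\alpha|z|^2/2}$, so the right-hand side is a constant; one obtains only $\tilde\mu_q\in L^\infty$, precisely the obstacle you flag. (Lebesgue measure already shows that $\tilde\mu_q\in L^\infty$ does not force $\tilde\mu_q\in L^1$.) Lemma~1.3 merely shuttles between the three $L^p$ formulations; it cannot manufacture integrability from boundedness. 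Your Rademacher--Khintchine superposition genuinely repairs this step: the uniform bound $\|f_\epsilon\|_{\infty,\alpha}\le M$ comes from the lattice Gaussian sum, and averaging plus Khintchine converts the single-kernel $L^\infty$ control into the $\ell^2$-type lower bound $\sum_k e^{-\alpha|z-a_k|^2}\ge e^{-\alpha r^2}$, which captures the total mass $\mu(\mathbb{C})$. This is the missing idea in the paper's argument.
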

\begin{proof} $(1)\Rightarrow (2)$.\
Let $\mu$ be a $(\infty,q)$ Fock-Carleson measure. Since $K_z\in F_{\alpha}^{\infty}$,
so $$\int_{\mathbb{C}}|K_{z}(\omega)e^{-\frac{\alpha|\omega|^2}{2}}|^q\text{d}\mu(\omega)\leqslant C||K_{z}||_{\infty,\alpha}^q.$$
Then
\begin{align}\nonumber
\widetilde{\mu}_{q}(z)
&=\frac{\alpha}{\pi}\int_{\mathbb{C}}|K_{z}(\omega)e^{-\frac{\alpha|\omega|^2}{2}}|^q\text{d}\mu(\omega)\\
&=\frac{\alpha}{\pi} e^{-\frac{\alpha q|z|^2}{2}} \int_{\mathbb{C}}|K_{z}(\omega)  e^{-\frac{\alpha|\omega|^2}{2}}|^q\text{d}\mu(\omega)\\
&\leqslant C \frac{\alpha}{\pi} e^{-\frac{\alpha q|z|^2}{2}} ||K_{z}||_{\infty,\alpha}^{q}.
\nonumber
\nonumber
\end{align}
According to Lemma 1.3,~$\widetilde{\mu}_{q}(z)\in L^1(\mathbb{C},\text{d}A)$,
%可推出$\{\mu(B(a_{k},r))\}\in l'$
so $\widetilde{\mu}_{t}(z)\in L^1(\mathbb{C},\text{d}A)$.\par
\indent $(2)\Rightarrow(3).$\ Through the direct calculation, we can get
\begin{align}\nonumber
 \mu(B(z,r))
 &=\int_{B(z,r)}1\text{d}\mu(\omega)\\
  &=\int_{B(z,r)}e^{-\frac{\alpha t|z-\omega|^2}{2}}e^{\frac{\alpha t|z-\omega|^2}{2}}\text{d}\mu(\omega)\\
  &\leqslant e^{\frac{\alpha t r^2}{2}}\int_{\mathbb{C}}e^{-\frac{\alpha t|z-\omega|^2}{2}}\text{d}\mu(\omega)\\
  &=e^{\frac{\alpha tr^2}{2}}\widetilde{\mu}_{t}(z).
  \nonumber
\end{align}
$\text{If }\tilde{\mu}_t(z)\in L^1(\mathbb{C},\text{d}A),\ \mu(B(z,r))\in L^1(dA)$.

\indent$(3)\Rightarrow(4)$.\ According to the estimate of Theorem 3.3 in [8], there is a constant $C>0$ such that
$$\mu(B(a_{k},r))\leqslant C\int_{B(a_{k},r)}\mu(B(\omega,\delta))\text{d}A(\omega),$$
 so
\begin{align}\nonumber
\sum_{k=1}^{+\infty}\mu(B(a_{k},r))
&\leqslant C\sum_{k=1}^{+\infty}\int_{B(a_{k},r)}\mu(B(\omega,\delta))\text{d}A(\omega)\\
  &\leqslant CN\int_{\mathbb{C}}\mu(B(\omega,\delta))\text{d}A(\omega)\\
  &< +\infty,
  \nonumber
\end{align}
therefore $\{\mu(B(a_{k},r)\}\in l^1$.
Because $\mathbb{C}=\cup_{k=1}^{+\infty}B(a_{k},r)$,
so $\mu(\mathbb{C})\leqslant\sum_{k=1}^{+\infty}\mu(B(a_{k},r)<C$. At this point $\mu$ is a finite Borel measure on $\mathbb{C}$.\\
\indent $(4)\Rightarrow (1)$.\
Let $\{\mu(B(a_k,r))\}\in l^1$. When $f\in F_{\alpha}^{\infty}$,
\begin{align}\nonumber
\int_{\mathbb{C}}|f(z)e^{-\frac{\alpha|z|^2}{2}}|^q\text{d}\mu(z)
&\leqslant\sum_{k=1}^{+\infty}\int_{B(a_k,r)}|f(z)e^{-\frac{\alpha|z|^2}{2}}|^q\text{d}\mu(z)\\
&\leqslant||f||_{\infty,\alpha}^{q}\sum_{k=1}^{+\infty}\mu(B(a_k,r))\\
&\leqslant C||f||_{\infty,\alpha}^q.
  \nonumber
\end{align}
It shows that $\mu$ is a $(\infty,q)$ Fock-Carleson measure . \ $\qedsymbol$
\end{proof}
\begin{theorem}
\textit{Let $\mu$ be a positive Borel measure on $\mathbb{C}$.
If $\mu$ is a $(\infty,q)$ Fock-Carleson measure, then $\widetilde{\mu_{t}}(z)\in L^{\infty}(dA)$ for $t>0.$}
\end{theorem}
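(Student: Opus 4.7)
The plan is to test the $(\infty,q)$ Fock–Carleson inequality against the most natural family of functions in $F_\alpha^\infty$, namely the normalized reproducing kernels $k_z$. A direct computation (which the paper has effectively already recorded) gives
$$|k_z(\omega)e^{-\alpha|\omega|^2/2}| = e^{-\alpha|z-\omega|^2/2} \leq 1,$$
so $k_z \in F_\alpha^\infty$ with $\|k_z\|_{\infty,\alpha}=1$, uniformly in $z$. Substituting $f=k_z$ into the defining inequality of a $(\infty,q)$ Fock–Carleson measure would then yield
$$\int_{\mathbb{C}} e^{-\alpha q|z-\omega|^2/2}\,d\mu(\omega) \leq C\|k_z\|_{\infty,\alpha}^{q}=C,$$
uniformly in $z$, which is precisely the statement $\tilde{\mu}_q \in L^\infty(\mathrm{d}A)$.

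To promote this from the single exponent $t=q$ to all $t>0$, I would invoke Theorem 2.2 to conclude that $\mu$ is a finite Borel measure on $\mathbb{C}$: by the equivalence $(1)\Leftrightarrow(4)$ just proved, $\{\mu(B(a_k,r))\}\in \ell^{1}$, and covering $\mathbb{C}$ by the balls $B(a_k,r)$ gives $\mu(\mathbb{C})\leq\sum_k \mu(B(a_k,r))<+\infty$. Once this is in hand, the pointwise Gaussian bound $e^{-\alpha t|z-\omega|^2/2}\leq 1$ immediately supplies
$$\tilde{\mu}_{t}(z)=\frac{\alpha}{\pi}\int_{\mathbb{C}}e^{-\alpha t|z-\omega|^{2}/2}\,d\mu(\omega)\leq \frac{\alpha}{\pi}\mu(\mathbb{C})<+\infty$$
for every $z\in \mathbb{C}$ and every $t>0$, establishing $\tilde{\mu}_t\in L^\infty(\mathrm{d}A)$.

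There is no genuine technical obstacle: both halves of the argument are essentially one-liners once the correct test function ($k_z$) and the correct byproduct of Theorem 2.2 (finiteness of $\mu$) are identified. The only care to take is to make the sup-norm computation $\|k_z\|_{\infty,\alpha}=1$ explicit, since the theorem's conclusion hinges on getting a constant independent of $z$ on the right-hand side of the Fock–Carleson estimate. If one wished to avoid invoking finiteness, the $t\geq q$ case follows from the monotonicity $\tilde{\mu}_t\leq\tilde{\mu}_q$, but the $t<q$ case really does seem to require either finiteness of $\mu$ or a uniform estimate on $\mu(B(z,\delta))$, both of which Theorem 2.2 already furnishes.
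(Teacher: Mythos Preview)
Your proposal is correct and the core step---testing the $(\infty,q)$ Fock--Carleson inequality against $f=k_z$ to obtain $\tilde{\mu}_q(z)\leq \frac{\alpha C}{\pi}$ uniformly in $z$---is exactly what the paper does. The only point of divergence is how one passes from $t=q$ to arbitrary $t>0$: the paper dispatches this in one line by appealing (implicitly here, explicitly in the analogous Theorem~4.1) to the equivalence recorded in Lemma~1.3, which says that $\tilde{\mu}_t\in L^\infty$ for one $t$ forces it for all $t$; you instead extract finiteness of $\mu$ from the preceding theorem and use the trivial bound $e^{-\alpha t|z-\omega|^2/2}\leq 1$. Your route is more self-contained and avoids leaning on the $p=\infty$ case of Lemma~1.3 (whose statement in the paper is literally only for $1\leq p<\infty$), while the paper's route is shorter once that equivalence is accepted. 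Either way the argument is essentially a one-liner after the kernel substitution, and your write-up makes the dependence on $\|k_z\|_{\infty,\alpha}=1$ appropriately explicit.
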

\begin{proof}
Let $\mu$ be a $(\infty,q)$ Fock-Carleson measure, then
for any $f\in F_{\alpha}^{\infty}$, there exists $C>0$
such that
$$\int_{\mathbb{C}}|f(z)e^{-\frac{\alpha|z|^2}{2}}|^q\text{d}\mu(\omega)\leqslant C||f||_{\infty,\alpha}^{q}.$$
Taking $f=k_z$, then
\begin{align}\nonumber
\widetilde{\mu}_q(z)
&=\frac{\alpha}{\pi}\int_{\mathbb{C}}|k_{z}(\omega)e^{-\frac{\alpha|\omega|^2}{2}}|^q\text{d}\mu(\omega)\\
&\leqslant \frac{\alpha}{\pi} C||k_z||_{\infty,\alpha}^q\\
&=\frac{\alpha C}{\pi}.
\nonumber
\end{align}
It can be known that $\widetilde{\mu_{t}}(z)\in L^{\infty}(\mathbb{C},\text{d}A)$. \ $\qedsymbol$
\end{proof}
\begin{theorem}
\textit{Let~$\mu$~be a positive Borel measure on~$\mathbb{C}$.
$\mu$ is a $(\infty,q)$ vanishing Fock-Carleson measure if and only if $\mu$ is a $(p,q)$ vanishing Fock-Carleson measure.}
\end{theorem}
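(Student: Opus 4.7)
The plan is to treat the two implications separately, in parallel with the non-vanishing version (Theorem 2.5). For the forward direction, the paper already exhibits the continuous inclusion $F_\alpha^p\hookrightarrow F_\alpha^\infty$ with $\|f\|_{\infty,\alpha}\leq\|f\|_{p,\alpha}$ (justified via Lemma 1.2). Hence any sequence $\{f_n\}\subset F_\alpha^p$ that is bounded and converges to zero uniformly on compact sets is simultaneously bounded in $F_\alpha^\infty$ and shares the same compact-convergence property; the assumed $(\infty,q)$ vanishing condition then applies verbatim and gives the integral convergence defining $(p,q)$ vanishing.

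The backward direction requires more work, and the strategy is to first show that $\mu$ is a finite Borel measure. Working in the natural regime $p>q$, Lemma 1.4 tells us that vanishing and non-vanishing $(p,q)$ Fock-Carleson properties coincide, so the hypothesis upgrades to $\mu$ being $(p,q)$ Fock-Carleson. Theorem 2.5 then promotes this to $(\infty,q)$ Fock-Carleson, and Theorem 2.6 extracts $\{\mu(B(a_k,r))\}\in\ell^1$, whence $\mu(\mathbb{C})<\infty$.

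Once $\mu$ is finite, given any $\{f_n\}\subset F_\alpha^\infty$ with $\|f_n\|_{\infty,\alpha}\leq M$ converging to $0$ uniformly on compacta, I would split the integral at radius $R$:
\begin{equation*}
\int_{\mathbb{C}}\bigl|f_n(z)e^{-\alpha|z|^2/2}\bigr|^q\,\text{d}\mu(z)=\int_{B(0,R)}+\int_{\mathbb{C}\setminus B(0,R)}.
\end{equation*}
Given $\varepsilon>0$, first choose $R$ large enough that $M^q\mu(\mathbb{C}\setminus B(0,R))<\varepsilon/2$ (possible by finiteness of $\mu$), and then let $n\to\infty$ so that uniform convergence on the compact disk forces the first piece below $\varepsilon/2$. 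The hard step is really the first one: extracting $\mu(\mathbb{C})<\infty$ from the vanishing hypothesis alone. The cleanest route depends on Lemma 1.4 and therefore on the regime $p>q$; outside this range one would need a direct argument, for instance testing against the normalized kernels $k_{z_n}$ with $|z_n|\to\infty$ and a Banach--Steinhaus-type uniform-boundedness step to recover the underlying Fock-Carleson estimate.
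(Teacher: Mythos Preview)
Your forward implication is exactly the paper's argument: the inclusion $F_\alpha^p\subset F_\alpha^\infty$ with $\|f\|_{\infty,\alpha}\le\|f\|_{p,\alpha}$ transports a bounded, locally-uniformly-null sequence in $F_\alpha^p$ into one in $F_\alpha^\infty$, and the $(\infty,q)$ vanishing hypothesis finishes it.

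For the converse you take a genuinely different route from the paper. The paper's short argument tries to invoke Lemma~2.1 (written there with a typo as $n\to\infty$ instead of $p\to\infty$) to pass from boundedness in $F_\alpha^\infty$ back to boundedness in some $F_\alpha^p$ and then apply the $(p,q)$ vanishing hypothesis directly; this step is at best sketchy, since a sequence bounded only in $F_\alpha^\infty$ need not lie in any $F_\alpha^p$ for finite $p$, so the $(p,q)$ hypothesis is not literally applicable to it. Your approach sidesteps this: you first extract $\mu(\mathbb{C})<\infty$ (Lemma~1.4 $\Rightarrow$ Theorem~2.5 $\Rightarrow$ Theorem~2.6), and then run a clean $B(0,R)$/tail splitting, which is a complete and correct argument. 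One simplification: the paper's intended reading (parallel to Theorem~2.5) is that the hypothesis holds for \emph{every} $p\ge 1$, so you may simply fix any $p>q$ at the outset; your caveat about the range $p\le q$ and the need for a Banach--Steinhaus workaround is then unnecessary.
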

\begin{proof}
Let $\{f_{n}\}$ be a  bounded sequence in $F_{\alpha}^p$ and converge to 0 uniformly on any compact subset of $\mathbb{C}$,
then $\{f_n\}$ is  bounded  in $F_{\alpha}^{\infty}$. Since $\mu$ is a $(\infty,q)$ vanishing Fock-Carleson measure, we have that
$$\lim_{n\to +\infty}\int_{\mathbb{C}}|f_n(z)e^{-\frac{\alpha|z|^2}{2}}|^q\text{d}\mu(z)=0,$$
so $\mu$\ is $(p,q)$ vanishing Fock-Carleson measure.
\par
Conversely, if $\mu$ is a $(p,q)$ vanishing Fock-Carleson measure for any $p\geqslant1$ , and
$\{f_n\}$ is bounded  sequence in $F_{\alpha}^p$ which converges  to 0 uniformly on any subset of $\mathbb{C}$, then
$$\lim_{n\to +\infty}\int_{\mathbb{C}}|f_{n}(z)e^{-\frac{\alpha|z|^2}{2}}|^q\text{d}\mu(z)=0.$$
Because $\lim\limits_{n\to +\infty}||f_n||_{p,\alpha}=||f_n||_{\infty,\alpha}$,
 and if$\{f_n\}$ is a bounded function sequence in $F_{\alpha}^p$, then $\{f_n\}$ is bounded in $F_{\alpha}^{\infty}$.
Thus $$\lim_n\int_{\mathbb{C}}|f_{n}(z)e^{-\frac{\alpha|z|^2}{2}}|^q\text{d}\mu(z)=0,$$
we can get that $\mu$ is a $(\infty,q)$ vanishing  Fock-Carleson measure.\ $\qedsymbol$
\end{proof}
\section{$(p,\infty)$ Fock-Carleson Measure}
%$(q,\infty)$Fock-Carleson测度.
In this section we will discuss necessary and sufficient  conditions for a positive Borel measure $\mu$ on  $\mathbb{C}$ to be
 a $(p,\infty)$ (vanishing) Fock-Carleson measure.
 $\mu$ needs to satisfy the little stronger condition that $\mu(\mathbb{C})$ is finite.
  We prove the following theorem firstly.

%\textbf{定义3.1}\  设 $\mu$是$\mathbb{C}$上的非负Borel测度,
%定义$$L_{\alpha}^p(C,\text{d}\mu)=\{f|\int_{\mathbb{C}}|f(z)e^{-%\frac{\alpha|z|^2}{2}}|^p\text{d}\mu<+\infty\}$$
%$$||f||_{p,\mu}=[\int_{\mathbb{C}}|f(z)e^{-\frac{\alpha|z|^2}%{2}}|^p\text{d}\mu]^{\frac{1}{p}}$$
%$$L_{\alpha}^{\infty}(C,\text{d}\mu)=\{ f|f \text{在$\mathbb{C}$上关于}\mu\text{本性有界}\}$$
%$$||f||_{\infty,\mu}=\inf\limits_{E_0\subset C \atop \mu(E_0)=0} %(\sup\limits_{C-E_0}|f(z)e^{-\frac{\alpha|z|^2}{2}}|)=esssup_{z\in \mathbb{C}}|f(z)e^{-\frac{\alpha|z|^2}{2}}|$$

\begin{lemma}
\textit{Let $\mu$ be a positive finite Borel measure, then~$\lim\limits_{p\to+\infty}||f||_{p,\mu}=||f||_{\infty,\mu}$
~for any $f\in \displaystyle{\bigcup_{p\geqslant1}}L_{\alpha}^p(\text{d}\mu)$.}
\end{lemma}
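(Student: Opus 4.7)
The plan is to reduce to the classical statement that for a finite positive measure $\nu$ on a measurable space, $\|g\|_{L^p(d\nu)} \to \|g\|_{L^\infty(d\nu)}$ as $p\to\infty$ whenever $g\in L^{p_0}(d\nu)$ for some $p_0\ge 1$. Setting $g(z) := f(z)e^{-\alpha|z|^2/2}$, we have $\|f\|_{p,\mu}=\|g\|_{L^p(d\mu)}$ and $\|f\|_{\infty,\mu}=\|g\|_{L^\infty(d\mu)}$ by definition, and the hypothesis $f\in L_{\alpha}^{p_0}(\mathrm{d}\mu)$ for some $p_0\ge 1$ becomes $g\in L^{p_0}(d\mu)$. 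I would then establish $\limsup_{p\to\infty}\|f\|_{p,\mu}\le \|f\|_{\infty,\mu}$ and $\liminf_{p\to\infty}\|f\|_{p,\mu}\ge \|f\|_{\infty,\mu}$ separately.

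For the upper bound, assume first $M:=\|f\|_{\infty,\mu}<\infty$. Since $|g|\le M$ almost everywhere with respect to $\mu$, for any $p>p_0$ one has $|g|^p=|g|^{p_0}\,|g|^{p-p_0}\le M^{p-p_0}|g|^{p_0}$, and integrating gives
\[
\|f\|_{p,\mu}^{p}\ \le\ M^{p-p_0}\,\|f\|_{p_0,\mu}^{p_0}.
\]
Taking the $p$-th root and letting $p\to\infty$, the factor $M^{1-p_0/p}\to M$ and $\|f\|_{p_0,\mu}^{p_0/p}\to 1$, giving $\limsup\|f\|_{p,\mu}\le M$; the case $M=+\infty$ is vacuous. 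For the lower bound, for each $0<\varepsilon<M$ the set $E_\varepsilon:=\{z\in\mathbb{C}:|g(z)|>M-\varepsilon\}$ satisfies $\mu(E_\varepsilon)>0$ by definition of the essential supremum, and the finiteness of $\mu$ gives $\mu(E_\varepsilon)<\infty$. Thus
\[
\|f\|_{p,\mu}^{p}\ \ge\ \int_{E_\varepsilon}|g|^{p}\,d\mu\ \ge\ (M-\varepsilon)^{p}\,\mu(E_\varepsilon),
\]
so $\|f\|_{p,\mu}\ge (M-\varepsilon)\,\mu(E_\varepsilon)^{1/p}\to M-\varepsilon$ as $p\to\infty$; since $\varepsilon$ is arbitrary, $\liminf\|f\|_{p,\mu}\ge M$. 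When $M=+\infty$, the same argument with $M-\varepsilon$ replaced by an arbitrarily large constant $K$ and $E_\varepsilon$ replaced by $\{|g|>K\}$ shows $\liminf\|f\|_{p,\mu}\ge K$ for every $K$, so the limit is $+\infty$ and matches $\|f\|_{\infty,\mu}$ trivially.

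The main point to be careful about is exactly where the hypothesis that $\mu$ is a \emph{finite} Borel measure enters. It is used in two places: in the upper bound one needs $\mu(\mathbb{C})<\infty$ implicitly to ensure that having $f\in L_{\alpha}^{p_0}(\mathrm{d}\mu)$ together with $|g|\le M$ actually places $f$ in every $L_{\alpha}^{p}(\mathrm{d}\mu)$ for $p>p_0$ (otherwise the interpolation bound could fail to be meaningful), and in the lower bound it is used to guarantee $\mu(E_\varepsilon)<\infty$ so that $\mu(E_\varepsilon)^{1/p}\to 1$ rather than remaining at $+\infty$. Beyond this point, the argument is the standard $L^p\to L^\infty$ convergence and no further obstacle is expected.
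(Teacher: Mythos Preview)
Your argument is correct and follows essentially the same two-sided estimate as the paper: a pointwise bound for $\limsup$ and the set $E_\varepsilon$ for $\liminf$. The only minor difference is in the upper bound, where the paper integrates $|g|^p\le M^p$ directly against the finite total mass $\mu(\mathbb{C})$, whereas you interpolate via $|g|^p\le M^{p-p_0}|g|^{p_0}$; your route has the small advantage that it does not actually require $\mu(\mathbb{C})<\infty$ for that direction, so your remark that finiteness is needed ``implicitly'' in the upper bound is unnecessary---your own inequality already gives $g\in L^p(d\mu)$ from $g\in L^{p_0}(d\mu)$ without any assumption on $\mu(\mathbb{C})$.
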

\begin{proof}
Let $f\in L_{\alpha}^{\infty}(\text{d}\mu)$,
then $|f(z)e^{-\frac{\alpha|z|^2}{2}}|\leqslant ||f|||_{\infty,\mu}$.
It can be obtained%$||f||_{p,\mu}^p=\int_{\mathbb{C}}|f(z)e^{-\frac{\alpha|z|^2}{2}}|^p\text{d}\mu $
\begin{align}\nonumber
||f||_{p,\mu}^p
&=\int_{\mathbb{C}}|f(z)e^{-\frac{\alpha|z|^2}{2}}|^p\text{d}\mu(z)\\
&\leqslant \int_{\mathbb{C}}||f||_{\infty,\mu}^p\text{d}\mu(z) \\
&\leqslant ||f||_{\infty,\mu}^p\mu(\mathbb{C}),
\end{align}
thus $$\varliminf_{p\to +\infty}||f||_{p,\mu}\leqslant||f||_{\infty,\mu}.$$

On the other hand, we denote $E_{\varepsilon}=\{z\in \mathbb{C} |  |f(z)|\geqslant ||f||_{\infty,\mu}-\varepsilon\}$ for any $\varepsilon>0$,
then $\mu(E_{\varepsilon})>0$. If $\mu(E_{\varepsilon})=0$,
then $|f(z)|\leqslant ||f||_{\infty,\mu}-\varepsilon$ for any $z\in E-E_{\varepsilon}$.
This contradicts the definition of $||f||_{\infty,\mu}$. At this moment
\begin{align}\nonumber
||f||_{p,\mu}^p
&=\int_{\mathbb{C}}|f(z)e^{-\frac{\alpha|z|^2}{2}}|^p\text{d}\mu\\
&\geqslant \int_{E_{\varepsilon}}|f(z)e^{-\frac{\alpha|z|^2}{2}}|^p\text{d}\mu \\
&\geqslant (||f||_{\infty}-\varepsilon)^p\mu(E_{\varepsilon}),
\end{align}
so $$(||f||_{\infty}-\varepsilon)(\mu(E_{\varepsilon}))^{\frac{1}{p}}\leqslant ||f||_{p,\mu},$$
then
$$||f||_{\infty}\leqslant \varliminf_{p\to +\infty}||f||_{p,\mu}.$$
This means $||f||_{\infty,\mu}=\lim\limits_{p\to+\infty}||f||_{p,\mu}.$
\ $\qedsymbol$
\end{proof}
\begin{theorem}
\textit{Let $\mu$ be a positive finite Borel measure on $\mathbb{C}$.
 $\mu$ is a $(p,\infty)$ Fock-Carleson measure if and only if $\mu$ is a $(p,q)$ Fock-Carleson measure.}
\end{theorem}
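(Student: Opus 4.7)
The plan is to mirror the two-step template already used for Theorem 2.5, with Lemma 3.1 playing the role that Lemma 2.4 played there. Finiteness of $\mu$ is decisive on both sides: it lets me pass from an $L^{\infty}(\text{d}\mu)$ estimate to an $L^{q}(\text{d}\mu)$ one by the factor $\mu(\mathbb{C})^{1/q}$, and, conversely, it is the hypothesis that makes Lemma 3.1 give $\|f\|_{q,\mu}\to\|f\|_{\infty,\mu}$ as $q\to\infty$.

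For the easy direction $(p,\infty)\Rightarrow(p,q)$, I would just estimate
$$\|f\|_{q,\mu}^{q}=\int_{\mathbb{C}}\bigl|f(z)e^{-\alpha|z|^{2}/2}\bigr|^{q}\,\text{d}\mu(z)\leqslant\|f\|_{\infty,\mu}^{q}\,\mu(\mathbb{C}),$$
and then feed in the $(p,\infty)$ Fock-Carleson inequality on the right-hand side to obtain $\|f\|_{q,\mu}\leqslant C\mu(\mathbb{C})^{1/q}\|f\|_{p,\alpha}$, which is exactly the $(p,q)$ bound.

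For the converse $(p,q)\Rightarrow(p,\infty)$, I would first observe that by Lemma 1.2 every $f\in F_{\alpha}^{p}$ satisfies a uniform pointwise bound $|f(z)e^{-\alpha|z|^{2}/2}|\leqslant C_{p}\|f\|_{p,\alpha}$, so in particular $f\in L_{\alpha}^{\infty}(\text{d}\mu)$ and Lemma 3.1 is applicable. Reading the hypothesis as in the proof of Theorem 2.5, namely as the $(p,q)$ Fock-Carleson inequality holding with a constant that does not depend on $q$, I would then send $q\to\infty$ in $\|f\|_{q,\mu}\leqslant C\|f\|_{p,\alpha}$ and use Lemma 3.1 on the left to conclude $\|f\|_{\infty,\mu}\leqslant C\|f\|_{p,\alpha}$, which is the $(p,\infty)$ property.

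The main delicate point I anticipate is justifying the tacit uniformity in $q$ required for the limit step; the analogous argument in Theorem 2.5 passes over the issue silently. A cleaner write-up would either quantify the hypothesis explicitly over $q\geqslant 1$ with a common constant, or else supplement the limit argument by using Hölder's inequality together with $\mu(\mathbb{C})<\infty$ to transfer the $(p,q)$ bound between different values of $q$ before taking the essential supremum.
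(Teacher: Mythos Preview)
Your proposal is correct and follows essentially the same route as the paper: both directions rest on Lemma~3.1, with the $(p,\infty)\Rightarrow(p,q)$ step using the elementary bound $\|f\|_{q,\mu}^{q}\leqslant\|f\|_{\infty,\mu}^{q}\,\mu(\mathbb{C})$ (which is precisely the inequality inside the proof of Lemma~3.1) and the converse obtained by letting $q\to\infty$ via Lemma~3.1. Your write-up is in fact more careful than the paper's---you explicitly check that $f\in L_{\alpha}^{\infty}(\text{d}\mu)$ so that Lemma~3.1 applies, and you flag the tacit uniformity-in-$q$ assumption that the paper (as in Theorem~2.5) passes over in silence.
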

\begin{proof}
 Let $\mu$ be a $(p,\infty)$ Fock-Carleson measure firstly. For any $f\in F_{\alpha}^{p}$, there is a constant $C>0$ such that
$$||f||_{\infty,\mu}=\text{esssup}
_{z\in \mathbb{C}}|f(z) e^{-\frac{\alpha |z|^2}{2}}|\leqslant C||f||_{p,\alpha}.$$
Because
$$\lim\limits_{q\to+\infty}||f||_{q,\mu}=||f||_{\infty,\mu},$$
we can get $$\int_{\mathbb{C}}|f(z)e^{-\frac{\alpha|z|^2}{2}}|^q\text{d}\mu \leqslant ||f||_{p,\alpha}^q.$$
So $\mu$ is $(p,q)$ Fock-Carleson measure.\par
We now prove the necessity. Let $\mu$ be a $(p,q)$ Fock-Carleson measure, then
 there exists constant $C>0$ for any $f\in F_{\alpha}^p$ such that
 $$||f||_{q,\mu}\leqslant C||f||_{p,\alpha}.$$
Because $$\lim\limits_{q\to +\infty}||f||_{q,\mu}=||f||_{\infty,\mu},$$
so $$||f||_{\infty,\mu}\leqslant C||f||_{p,\alpha},$$
 we can obtain that $\mu$ is a $(p,\infty)$ Fock-Carleson measure.\ $\qedsymbol$
\end{proof}
\begin{theorem}
\textit{Let $\mu$ be a positive finite measure Borel measure on $\mathbb{C}$.
$\mu$ is a $(p,\infty)$ vanishing Fock-Carleson measure
if and only if $\mu$ is a $(p,q)$ vanishing Fock-Carleson measure for any $q\geqslant p\geqslant1$ .}
\end{theorem}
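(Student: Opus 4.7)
The plan is to model the argument on the proof of Theorem 3.2, replacing the two-sided Carleson inequality by the sequential vanishing condition. The key tool is again Lemma 3.1, which asserts $\lim_{q\to+\infty}\|f\|_{q,\mu}=\|f\|_{\infty,\mu}$ for any $f\in\bigcup_{p\geqslant 1}L_{\alpha}^{p}(\text{d}\mu)$ whenever $\mu$ is a finite positive Borel measure.

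For necessity, take a bounded sequence $\{f_n\}\subset F_{\alpha}^{p}$ converging to $0$ uniformly on compact subsets of $\mathbb{C}$. By hypothesis $\|f_n\|_{\infty,\mu}\to 0$, and the bound $\|f_n\|_{q,\mu}\leqslant\|f_n\|_{\infty,\mu}\,\mu(\mathbb{C})^{1/q}$ extracted from inequality (3.1) in the proof of Lemma 3.1 then forces $\|f_n\|_{q,\mu}\to 0$. Hence $\mu$ is $(p,q)$ vanishing Fock-Carleson for every $q\geqslant p\geqslant 1$.

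For sufficiency, take the same type of sequence $\{f_n\}\subset F_{\alpha}^{p}$. Since $\mu$ is finite and $|f_n(z)|e^{-\alpha|z|^{2}/2}\leqslant C\|f_n\|_{p,\alpha}$ by Lemma 1.2, each $f_n$ lies in $L_{\alpha}^{\infty}(\text{d}\mu)$, so Lemma 3.1 yields $\lim_{q\to+\infty}\|f_n\|_{q,\mu}=\|f_n\|_{\infty,\mu}$ for every $n$. The hypothesis provides $\lim_n\|f_n\|_{q,\mu}=0$ for each $q\geqslant p$, and the target $\lim_n\|f_n\|_{\infty,\mu}=0$ is to be obtained by passing through the limit $q\to+\infty$, in parallel with the sufficiency step of Theorem 3.2.

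The hard step is the interchange of the two limits in $n$ and $q$. To underwrite it, exploit the uniform bound $\|f_n\|_{\infty,\mu}\leqslant C\|f_n\|_{p,\alpha}\leqslant CM$, which follows by applying Theorem 3.2 to the $(p,q)$ Fock-Carleson property that a $(p,q)$ vanishing measure inherits a fortiori. Combined with $\lim_{q\to+\infty}\|f_n\|_{q,\mu}=\|f_n\|_{\infty,\mu}$ for each fixed $n$ and $\lim_n\|f_n\|_{q,\mu}=0$ for each fixed $q$, this uniform control is the lever that should deliver the desired conclusion $\|f_n\|_{\infty,\mu}\to 0$.
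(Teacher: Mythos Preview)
The paper's own proof is omitted: it reads in full ``The discussions are similar in Theorem 2.8, we omitted here.'' Your decision to model instead on Theorem~3.2 (the non-vanishing analogue in the same section) via Lemma~3.1 is entirely in the spirit of that instruction, and your necessity argument is correct and cleaner than anything the paper writes down.

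The sufficiency direction, however, has a genuine gap that you flag but do not close. You reduce matters to an interchange of limits: $\lim_{q\to\infty}\|f_n\|_{q,\mu}=\|f_n\|_{\infty,\mu}$ for each fixed $n$, and $\lim_{n}\|f_n\|_{q,\mu}=0$ for each fixed $q$, and you want $\lim_n\|f_n\|_{\infty,\mu}=0$. A uniform bound $\sup_n\|f_n\|_{\infty,\mu}\leqslant CM$ is \emph{not} sufficient for this interchange; the toy array $a_{n,q}=\min(q/n,1)$ already shows how it fails. Worse, the gap is not merely technical. Take $\mu=\sum_{k\geqslant 1}2^{-k}\delta_{a_k}$ with $|a_k|\to\infty$: this is a finite positive Borel measure, and a dominated-convergence argument over the atoms shows it is $(p,q)$ vanishing Fock--Carleson for every $q\geqslant p\geqslant 1$. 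Yet with $f_n=k_{a_n}$ one has $\|f_n\|_{p,\alpha}=1$, $f_n\to 0$ uniformly on compacta, and $\|f_n\|_{\infty,\mu}\geqslant|k_{a_n}(a_n)|e^{-\alpha|a_n|^{2}/2}=1$ for every $n$, so $\mu$ is not $(p,\infty)$ vanishing. Hence the step where you write that the uniform control ``should deliver'' the conclusion is exactly where the argument breaks, and no patch along these lines will work. In fairness, the paper's parallel proofs of Theorems~2.8 and~3.2 pass over the same limit-interchange issue without comment, so your write-up is at least as careful as the original; but the gap is real.
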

\begin{proof}
The discussions are similar in Theorem 2.8, we omitted here.
\end{proof}
\section{Toeplitz Operators on $F_{\alpha}^{\infty}$}
In this section, we will focus on  boundedness and compactness of the Toeplitz operator $T_{\mu}$ on $F_{\alpha}^{\infty}$ with the symbols $\mu$ which satisfying the condition (M).
Let $\mu$ be a positive finite Borel measure on $\mathbb{C}$, and define Toeplitz operator $T_{\mu}$ with $\mu $ on $F_{\alpha}^{\infty}$ as following
$$T_{\mu}f(z)=\int_{\mathbb{C}} f(\omega) \overline{K_{z}(\omega)}e^{-\alpha|\omega|^2}\text{d}\mu(\omega),$$
where $f\in F_{\alpha}^{\infty}$.

\begin{theorem}
\textit{Let $\mu$ be a positive Borel measure on $\mathbb{C}$.
$T_{\mu}$ is bounded on $F_{\alpha}^{\infty}$ if and only if $\tilde{\mu}_1(t)\in L^{\infty}(\mathbb{C},\text{d}A)$.}
\end{theorem}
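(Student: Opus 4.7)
The plan is to prove the two implications separately, with the main work lying in a bridge between $\tilde\mu_2$ and $\tilde\mu_1$.

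For sufficiency, assume $\tilde\mu_1 \in L^\infty(\mathbb{C},dA)$. I would pull the weight $e^{-\alpha|z|^2/2}$ inside the integral defining $T_\mu f(z)$ and apply the pointwise identity $|K_z(\omega)|\,e^{-\alpha(|z|^2+|\omega|^2)/2} = e^{-\alpha|z-\omega|^2/2}$ (which follows from completing the square in $|K_z(\omega)|^2 = e^{2\alpha\,\text{Re}(\bar z\omega)}$). This yields
$$|T_\mu f(z)|\,e^{-\alpha|z|^2/2} \leq \|f\|_{\infty,\alpha}\int_{\mathbb{C}} e^{-\alpha|z-\omega|^2/2}\,d\mu(\omega) = \|f\|_{\infty,\alpha}\cdot\frac{\pi}{\alpha}\tilde\mu_1(z),$$
so taking the essential supremum over $z$ gives $\|T_\mu f\|_{\infty,\alpha} \leq \frac{\pi}{\alpha}\|\tilde\mu_1\|_\infty\|f\|_{\infty,\alpha}$, i.e., $T_\mu$ is bounded on $F_\alpha^\infty$.

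For necessity, the natural test functions are the normalized reproducing kernels $k_z$, since $\|k_z\|_{\infty,\alpha} = 1$ and so boundedness of $T_\mu$ gives a uniform bound on $\|T_\mu k_z\|_{\infty,\alpha}$. Evaluating $T_\mu k_z$ at the point $z$ and using $k_z(\omega)\overline{K_z(\omega)}e^{-\alpha|\omega|^2} = e^{\alpha|z|^2/2}\,e^{-\alpha|z-\omega|^2}$, a direct calculation produces
$$T_\mu k_z(z)\,e^{-\alpha|z|^2/2} = \frac{\pi}{\alpha}\tilde\mu_2(z),$$
and since the left-hand side is bounded by $\|T_\mu k_z\|_{\infty,\alpha} \leq \|T_\mu\|$, we conclude $\tilde\mu_2 \in L^\infty$.

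The main obstacle is the gap between what the test functions give ($\tilde\mu_2$) and what the statement claims ($\tilde\mu_1$). To close it I would use the Gaussian semigroup identity
$$e^{-\alpha|z-\omega|^2/2} = \frac{2\alpha}{\pi}\int_{\mathbb{C}} e^{-\alpha|z-\zeta|^2}\,e^{-\alpha|\zeta-\omega|^2}\,dA(\zeta),$$
obtained by completing the square in $\zeta$. Integrating both sides against $d\mu(\omega)$ and applying Fubini (the integrands are all nonnegative) represents $\tilde\mu_1$ as a Gaussian convolution of $\tilde\mu_2$:
$$\tilde\mu_1(z) = \frac{2\alpha}{\pi}\int_{\mathbb{C}} e^{-\alpha|z-\zeta|^2}\,\tilde\mu_2(\zeta)\,dA(\zeta).$$
Since $\frac{2\alpha}{\pi}\int_{\mathbb{C}} e^{-\alpha|z-\zeta|^2}\,dA(\zeta) = 2$, this immediately gives $\|\tilde\mu_1\|_\infty \leq 2\|\tilde\mu_2\|_\infty < \infty$, completing the implication.
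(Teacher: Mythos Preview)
Your proof is correct and follows the same overall architecture as the paper: sufficiency via the pointwise estimate $|T_\mu f(z)|e^{-\alpha|z|^2/2}\le \|f\|_{\infty,\alpha}\,\tilde\mu_1(z)$ (up to a harmless $\pi/\alpha$), and necessity by testing on $k_z$ to obtain $\tilde\mu_2\in L^\infty$ and then passing to $\tilde\mu_1$. In fact your pointwise evaluation $T_\mu k_z(z)\,e^{-\alpha|z|^2/2}$ is exactly the pairing $\langle T_\mu k_z,k_z\rangle$ the paper uses, since $g(z)=\langle g,K_z\rangle$ for entire $g$.

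The one genuine difference is the bridge $\tilde\mu_2\in L^\infty\Rightarrow\tilde\mu_1\in L^\infty$. The paper simply cites Lemma~1.3 (the equivalence, taken from reference~[8], of $\tilde\mu_t\in L^p$ for different $t$). You instead supply a self-contained argument via the Gaussian semigroup identity $e^{-\alpha|z-\omega|^2/2}=\frac{2\alpha}{\pi}\int_{\mathbb C}e^{-\alpha|z-\zeta|^2}e^{-\alpha|\zeta-\omega|^2}\,dA(\zeta)$, which expresses $\tilde\mu_1$ as a heat smoothing of $\tilde\mu_2$ and gives the explicit bound $\|\tilde\mu_1\|_\infty\le 2\|\tilde\mu_2\|_\infty$. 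This is more elementary and transparent than invoking the external lemma, at the cost of being tailored to the specific pair $(t,t')=(1,2)$ rather than the general equivalence.
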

\begin{proof} For any $f\in F_{\alpha}^{\infty}$,
\begin{align}\nonumber
|T_{\mu}f(z)e^{-\frac{\alpha|z|^2}{2}}|
&=|\int_{\mathbb{C}}f(\omega)\overline{K_{z}(\omega)}e^{-\alpha|\omega|^2}e^{-\frac{\alpha|z|^2}{2}}\text{d}\mu(\omega)|\\
&\leqslant \int_{\mathbb{C}}|f(\omega)e^{-\frac{\alpha|\omega|^2}{2}}| |k_z(\omega)e^{-\frac{\alpha|\omega|^2}{2}}|\text{d}\mu(\omega) \\
&\leqslant ||f||_{\infty,\alpha}\int_{\mathbb{C}}|k_z(\omega)e^{-\frac{\alpha|\omega|^2}{2}}|\text{d}\mu(\omega) \\
&=||f||_{\infty,\alpha}\tilde{\mu}_1(z).
\end{align}
If $\tilde{\mu}_1(z)\in L^{\infty}(\mathbb{C},\text{d}A) $, then $T_{\mu}f\in F_{\alpha}^{\infty}$.
\par
On the contrary,                                                                                                                                                                                                                  let $T_{\mu}$ be bounded on $F_{\alpha}^{\infty}$, thus
\begin{align}\nonumber
\tilde{\mu}_2(z)
&=\int_{\mathbb{C}}|k_z(\omega)e^{-\frac{\alpha|\omega|^2}{2}}|^2\text{d}\mu(\omega)\\
&=|<T_{\mu}k_z,k_z>| \\
&\leqslant ||T_{\mu}k_z||_{\infty,\alpha}||k_z||_{1,\alpha}\\
&\leqslant||T_{\mu}||,
\end{align}
then $\tilde{\mu}_2(z)\in L^{\infty}(\mathbb{C},\text{d}A)$.
By Lemma 1.3, we have $\tilde{\mu}_1(z)\in L^{\infty}(\mathbb{C}, \text{d}A)$.\ $\qedsymbol$
\end{proof}
\begin{theorem}\ \textit{Let $\mu$ be a positive Borel measure on $\mathbb{C}$.
$T_{\mu}$ is a compact operator on $F_{\alpha}^{\infty}$ if and only if $\tilde{\mu}_1(z)\in C_0(\mathbb{C})$.}
\end{theorem}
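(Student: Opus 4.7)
The plan is to use the standard sequential characterization of compactness on $F_\alpha^\infty$: the operator $T_\mu$ is compact if and only if $\|T_\mu f_n\|_{\infty,\alpha}\to 0$ for every sequence $\{f_n\}$ that is bounded in $F_\alpha^\infty$ and converges to $0$ uniformly on compact subsets of $\mathbb{C}$. This equivalence is justified by Montel's theorem, since a norm-bounded subset of $F_\alpha^\infty$ consists of entire functions with locally uniform growth bounds, hence forms a normal family from which one can always extract a locally uniformly convergent subsequence.

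For the necessity, I would test $T_\mu$ on the normalized kernels $k_z$. One has $\|k_z\|_{\infty,\alpha}=1$, and since $|k_z(\omega)e^{-\alpha|\omega|^2/2}|=e^{-\alpha|z-\omega|^2/2}$, the family $\{k_z\}$ tends to $0$ uniformly on every compact set as $|z|\to\infty$. The compactness characterization then forces $\|T_\mu k_z\|_{\infty,\alpha}\to 0$. Reusing the $F_\alpha^1$--$F_\alpha^\infty$ duality bound from the proof of Theorem 4.1, one obtains $\tilde{\mu}_2(z)=|\langle T_\mu k_z,k_z\rangle|\leq C\|T_\mu k_z\|_{\infty,\alpha}\|k_z\|_{1,\alpha}\to 0$, so $\tilde{\mu}_2\in C_0(\mathbb{C})$. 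Passing from $\tilde{\mu}_2\in C_0$ to $\tilde{\mu}_1\in C_0$ proceeds via the $C_0$-analogue of Lemma 1.3 that the paper already invokes (in its $L^\infty$ form) at the end of Theorem 4.1.

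For the sufficiency, let $\{f_n\}$ be bounded in $F_\alpha^\infty$ with $\|f_n\|_{\infty,\alpha}\leq M$ and $f_n\to 0$ uniformly on compact subsets. Fix $\varepsilon>0$. Using $\tilde{\mu}_1\in C_0$, pick $R$ so that $\tilde{\mu}_1(z)<\varepsilon$ for $|z|>R$; for such $z$ the pointwise estimate from Theorem 4.1 immediately gives $|T_\mu f_n(z)e^{-\alpha|z|^2/2}|\leq M\tilde{\mu}_1(z)<M\varepsilon$. For $|z|\leq R$, pick $R_1\gg R$ and split the $\mu$-integral into $B(0,R_1)$ and its complement. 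On the near piece, uniform convergence on the compact set $\overline{B(0,R_1)}$ gives $\sup_{B(0,R_1)}|f_n|\to 0$, so this contribution is at most $o_n(1)\,\|\tilde{\mu}_1\|_\infty$. On the far piece, the inequality $|z-\omega|\geq R_1-R$ lets me extract a factor $e^{-\alpha(R_1-R)^2/4}$ and bound the remaining integral by $\tilde{\mu}_{1/2}(z)\leq\|\tilde{\mu}_{1/2}\|_\infty$, which is finite by the $L^\infty$-invariance of Lemma 1.3; this contribution is therefore at most $Me^{-\alpha(R_1-R)^2/4}\|\tilde{\mu}_{1/2}\|_\infty$, which vanishes as $R_1\to\infty$. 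Choosing $R_1$ first and then letting $n\to\infty$ yields $\limsup_n\|T_\mu f_n\|_{\infty,\alpha}\leq C\varepsilon$, and since $\varepsilon$ is arbitrary, $\|T_\mu f_n\|_{\infty,\alpha}\to 0$, proving compactness.

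The step I expect to be the main obstacle is the sufficiency direction. The naive estimate $|T_\mu f(z)e^{-\alpha|z|^2/2}|\leq\|f\|_{\infty,\alpha}\tilde{\mu}_1(z)$ from Theorem 4.1 is too crude in the region $|z|\leq R$, because the hypothesis $f_n\to 0$ uniformly on compacts only controls $f_n$ on compact $\omega$-sets while the integral runs over all of $\mathbb{C}$. The double decomposition in both $z$ and $\omega$, together with the repeated appeal to the invariance of $\tilde{\mu}_t\in L^\infty$ across different values of $t$ (needed to tame the far-tail), is the delicate point; the corresponding $C_0$-invariance must also be verified to close the necessity half.
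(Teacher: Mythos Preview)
Your necessity argument coincides with the paper's: both test on the normalized kernels $k_z$, use the pairing bound $\tilde{\mu}_2(z)=|\langle T_\mu k_z,k_z\rangle|\le\|T_\mu k_z\|_{\infty,\alpha}\|k_z\|_{1,\alpha}$ to get $\tilde{\mu}_2\in C_0(\mathbb{C})$, and then pass to $\tilde{\mu}_1\in C_0(\mathbb{C})$ via the $C_0$-analogue of Lemma~1.3.

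For sufficiency, however, your route is genuinely more careful than the paper's. The paper simply quotes the pointwise estimate $|T_\mu f_n(z)e^{-\alpha|z|^2/2}|\le\|f_n\|_{\infty,\alpha}\,\tilde{\mu}_1(z)$ from Theorem~4.1 and immediately asserts $\lim_n\|T_\mu f_n\|_{\infty,\alpha}=0$ from $\tilde{\mu}_1\in C_0$. As you correctly anticipated, this inference does not follow: the bound only forces smallness for $|z|$ large, while for $|z|$ in a fixed compact set the right-hand side is merely a constant, and $\|f_n\|_{\infty,\alpha}$ need not tend to~$0$. Your double decomposition---first in $z$ (using $\tilde{\mu}_1\in C_0$ at infinity), then in $\omega$ for bounded $z$ (using uniform convergence of $f_n$ on the near piece and the Gaussian tail together with $\tilde{\mu}_{1/2}\in L^\infty$ on the far piece)---is exactly what is required to close the argument, and it is the standard technique here. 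In short, the concern you flagged as ``the main obstacle'' is real, your fix is correct, and the paper's own sufficiency proof, as written, does not address it.
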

\begin{proof} Let $\{f_n\}$ be a bounded sequence in $F_{\alpha}^{\infty}$ and converge  to 0 uniformly on any compact subsets of $\mathbb{C}$.
 By Theorem 4.1
$$
|T_{\mu}f_n(z)e^{-\frac{\alpha|z|^2}{2}}|
\leq||f_n||_{\infty,\alpha}\tilde{\mu}_1(z),\\
$$
then $||T_{\mu}f_n||_{\infty,\alpha}\leq ||f_n||_{\infty,\alpha}\tilde{\mu}_1(z).$
If $\tilde{\mu}_1(z)\in C_0(\mathbb{C})$,
then $\lim\limits_{n\rightarrow +\infty}||T_{\mu}f_n||_{\infty,\alpha}=0$,
thus $T_{\mu}$ is a compact operator on $F_{\alpha}^{\infty}$.
\par
On the contrary, let $f_n=k_z$, thus $||k_z||_{\infty,\alpha}=1$.\
When $|\omega|\leqslant R$,  $\{k_z(\omega)\}$ converges to 0 uniformly as $z$ converges to $\infty$.
$T_{\mu}$ is compact, then
$$\lim\limits_{z\to \infty}||T_{\mu}k_z||_{\infty,\alpha}=0.$$
And  \begin{align}\nonumber
\tilde{\mu}_{2}(z)=|<T_{\mu}k_z,k_z>|
&\leqslant ||T_{\mu} k_z||_{\infty,\alpha}||k_z||_{1,\alpha}\\
&\leqslant ||T_{\mu} k_z||_{\infty,\alpha}.
\end{align}
Since $\lim\limits_{z\to \infty} ||T_{\mu} k_z||_{\infty,\alpha}
=0 $,\
so $\tilde{\mu}_2(z)\in C_0(\mathbb{C})$.\
By Lemma 1.3, $\tilde{\mu}_1(z)\in C_0(\mathbb{C})$.\qedsymbol
\end{proof}
\par
Let $\varphi \geqslant 0$. $\varphi$  satisfys the condition $(\uppercase\expandafter{\romannumeral1}_1)$ in $[1]$ if
$$\int_{\mathbb{C}}|\varphi(\omega)||K_z(\omega)|^2\text{d}\lambda_\alpha(\omega)<+\infty.$$
If $\text{d}\mu=\varphi\text{d}A $, we define    $T_{\varphi}$ on $F_{\alpha}^{\infty}$ as
$$T_{\varphi}f(z)=\int_{\mathbb{C}}\varphi(\omega)f(\omega)\overline{K_z(\omega)}\text{d}\lambda_{\alpha}(\omega),$$
then $T_{\varphi}=T_{\mu}$, and $\tilde{\varphi}_t(z)=\tilde{\mu}_t(z)$.\
\begin{theorem}
\textit{Let $\varphi \geqslant 0$ and $\varphi$ satisfys the conditions$(\uppercase\expandafter{\romannumeral1}_1)$.
  $T_{\varphi}$ is a bounded operator on $F_{\alpha}^{\infty}$ if and only if $\tilde{\varphi}_t\in L^{\infty}(\mathbb{C},\text{d}A)$ for $t>0$.}
\end{theorem}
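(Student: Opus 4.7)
The plan is to reduce Theorem 4.3 directly to Theorem 4.1 via the identification $d\mu = \varphi\, dA$. Since $\varphi \geq 0$ satisfies condition $(\uppercase\expandafter{\romannumeral1}_1)$, the positive Borel measure $d\mu := \varphi\, dA$ satisfies condition (M) on $\mathbb{C}$. By the definitions recalled just before the theorem, $T_\varphi = T_\mu$ as operators on $F_\alpha^\infty$, and $\tilde\varphi_t(z) = \tilde\mu_t(z)$ for every $t > 0$. Consequently, it suffices to prove that $T_\mu$ is bounded on $F_\alpha^\infty$ if and only if $\tilde\mu_t \in L^\infty(\mathbb{C}, dA)$ for $t > 0$.

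First I would apply Theorem 4.1, which already yields the equivalence between boundedness of $T_\mu$ on $F_\alpha^\infty$ and the membership $\tilde\mu_1 \in L^\infty(\mathbb{C}, dA)$. The only remaining issue is the $t$-independence of this membership, namely that $\tilde\mu_1 \in L^\infty$ iff $\tilde\mu_t \in L^\infty$ for every $t > 0$.

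For the $t$-independence, I would pass through the equivalent geometric characterization via $\mu(B(z, r))$. The elementary bound $\mu(B(z, r)) \leq e^{\alpha t r^2/2}\, \tilde\mu_t(z)$, already used in the proof of $(2) \Rightarrow (3)$ of Theorem 2.6, shows $\tilde\mu_t \in L^\infty \Rightarrow \sup_z \mu(B(z, r)) < \infty$. Conversely, if $M := \sup_z \mu(B(z, r)) < \infty$, then picking any $r$-lattice $\{a_k\}$ and splitting the integral defining $\tilde\mu_s(z)$ over the balls $B(a_k, r)$ gives
$$\tilde\mu_s(z) \leq C \sum_{k=1}^{+\infty} e^{-c s |z - a_k|^2}\, \mu(B(a_k, r)) \leq C M \sum_{k=1}^{+\infty} e^{-c s |z - a_k|^2},$$
and the last sum is uniformly bounded in $z$ by Gaussian decay together with the finite-overlap property of the lattice. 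Thus $\tilde\mu_s \in L^\infty$ for every $s > 0$, so the condition is genuinely $t$-independent.

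The main obstacle, modest as it is, lies in the $L^\infty$-endpoint: Lemma 1.3 is stated only for $1 \leq p < +\infty$ and so cannot simply be quoted to move between different values of $t$. That gap is precisely what the lattice estimate above fills. Once the $t$-independence is in hand, combining it with Theorem 4.1 and the identification $T_\varphi = T_\mu$, $\tilde\varphi_t = \tilde\mu_t$ closes the proof.
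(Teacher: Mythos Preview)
Your proposal is correct and follows essentially the same route as the paper: the paper's own proof of Theorem~4.3 simply repeats the computation from Theorem~4.1 with $d\mu=\varphi\,dA$ rather than citing it, and then asserts the $t$-independence of the condition $\tilde\varphi_t\in L^\infty$ (just as Theorem~4.1 invokes Lemma~1.3 for this, despite that lemma's stated range $1\le p<\infty$). Your explicit lattice argument for the $t$-independence in fact supplies the justification the paper glosses over.
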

\begin{proof}
 For any $f\in F_{\alpha}^{\infty}$,\
\begin{align}\nonumber
| T_{\varphi}f(z)e^{-\frac{\alpha|z|^2}{2}}|
&= |  \int_{\mathbb{C}} \varphi(\omega) f(\omega)\overline{K_z(\omega)} \text{d}\lambda_\alpha(\omega) e^{-\frac{\alpha|z|^2}{2}}|\\
    &\leqslant  \frac{\alpha}{\pi}  \int_{\mathbb{C} }\varphi(\omega) |f(\omega)| |e^{\alpha \overline{z}  \omega}|e^{-\alpha|\omega|^2-\frac{\alpha|z|^2}{2}}\text{d}A(\omega) \\
&=\frac{\alpha}{\pi}  \int_{\mathbb{C}} \varphi(\omega) |f(\omega) e^{-\frac{\alpha|\omega|^2}{2}} |  e^{-\frac{\alpha|z-\omega|^2}{2} }\text{d}A(\omega)\\
&\leqslant ||f||_{\infty,\alpha}\frac{\alpha}{\pi} \int_{\mathbb{C} }\varphi(\omega)  e^{-\frac{\alpha|z-\omega|^2}{2} }\text{d}A(\omega)\\
&=2||f||_{\infty,\alpha} \tilde{\varphi}_{1}(z).
\end{align}
Since $\varphi \geqslant 0$, $\tilde{\varphi}_t\in L^{\infty}(\mathbb{C},\text{d}A)$ equivalent to
$\tilde{ \varphi}_{1}\in L^{\infty}(\mathbb{C},\text{d}A)$, so $||T_{\varphi} f||_{\infty,\alpha}\leqslant 2||f||_{\infty,\alpha} ||\tilde{\varphi}_{1}||_{\infty}$,
then  $||T_{\varphi}||\leqslant 2||\tilde{\varphi}_{1}||_{\infty}$.
\par
On the contrary,
\begin{align}\nonumber
 \tilde{\varphi}_{2}(z)
&= <T_{\varphi} k_z,k_z >\\
    &\leqslant  ||T_{\varphi} k_z||_{\infty,\alpha} ||k_z||_{1,\alpha}\\
&\leqslant||T_{\varphi}||.
\end{align}
So $\tilde{\varphi}_2 $ is bounded, then $\tilde{\varphi}_t\in L^{\infty}(\mathbb{C},\text{d}A)$.\ \qedsymbol
\end{proof}
\begin{theorem}
 \textit{Suppose $\varphi \geqslant 0$, $\varphi$ satisfys the conditions $(\uppercase\expandafter{\romannumeral1_1})$.
$T_{\varphi}$ is a compact operator on $F_{\alpha}^{\infty}$ if and only if $\tilde{\varphi}_{t}\in C_0(\mathbb{C})$ for $t>0. $}
\end{theorem}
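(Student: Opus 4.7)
The plan is to mirror the proof of Theorem 4.2, replacing the pointwise bound coming from Theorem 4.1 with the analogous bound from Theorem 4.3, and to invoke an equivalence between the conditions $\tilde{\varphi}_{t}\in C_0(\mathbb{C})$ for different values of $t$ at the end, in place of the $L^{\infty}$ equivalence that Lemma 1.3 provides.

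For the sufficiency direction, suppose $\tilde{\varphi}_{t}\in C_0(\mathbb{C})$ for some (equivalently, every) $t>0$. To prove $T_{\varphi}$ is compact on $F_{\alpha}^{\infty}$, I would take a bounded sequence $\{f_n\}\subset F_{\alpha}^{\infty}$ that converges to $0$ uniformly on every compact subset of $\mathbb{C}$, and show $\|T_{\varphi}f_n\|_{\infty,\alpha}\to 0$. Starting from the pointwise estimate established in Theorem 4.3,
\[
\bigl|T_{\varphi}f_n(z)e^{-\frac{\alpha|z|^2}{2}}\bigr|\;\leq\;\frac{\alpha}{\pi}\int_{\mathbb{C}}\varphi(\omega)\,\bigl|f_n(\omega)e^{-\frac{\alpha|\omega|^2}{2}}\bigr|\,e^{-\frac{\alpha|z-\omega|^2}{2}}\text{d}A(\omega),
\]
I would split the $\omega$-integral at a large radius $R$. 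On $|\omega|\leq R$ the factor $|f_n(\omega)e^{-\alpha|\omega|^2/2}|$ tends to $0$ uniformly in $\omega$, which handles the inner part. On $|\omega|>R$ I would use $|f_n(\omega)e^{-\alpha|\omega|^2/2}|\leq\|f_n\|_{\infty,\alpha}\leq M$, so that part is dominated by $M\tilde{\varphi}_1(z)$ plus a small correction that can be absorbed by choosing $|z|$ large (via $\tilde{\varphi}_1\in C_0$); for bounded $z$, the tail of $\varphi$ relative to $B(z,R)$ shrinks as $R\to\infty$.

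For the necessity direction, assume $T_{\varphi}$ is compact. I would take the normalized reproducing kernels $k_{z_n}$ with $|z_n|\to\infty$, noting as in Theorem 4.2 that $\|k_{z_n}\|_{\infty,\alpha}=1$ and $k_{z_n}\to 0$ uniformly on compact sets. Compactness then gives $\|T_{\varphi}k_{z_n}\|_{\infty,\alpha}\to 0$. Combining with
\[
\tilde{\varphi}_2(z_n)=\langle T_{\varphi}k_{z_n},k_{z_n}\rangle\;\leq\;\|T_{\varphi}k_{z_n}\|_{\infty,\alpha}\|k_{z_n}\|_{1,\alpha}\;\leq\;\|T_{\varphi}k_{z_n}\|_{\infty,\alpha}\to 0,
\]
I conclude $\tilde{\varphi}_2\in C_0(\mathbb{C})$. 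To upgrade to $\tilde{\varphi}_t\in C_0(\mathbb{C})$ for every $t>0$, I would use the $C_0$-analog of Lemma 1.3 (the same Gaussian-convolution argument transfers decay at infinity from one value of $t$ to another when $\varphi\geq 0$, just as it transfers $L^{\infty}$ bounds).

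The main obstacle is the sufficiency direction: unlike in Theorem 4.3, where the pointwise bound by $2\tilde{\varphi}_1(z)$ immediately yields an operator-norm estimate, here we need to show the full norm $\|T_{\varphi}f_n\|_{\infty,\alpha}$ tends to zero, and $\tilde{\varphi}_1\in C_0$ alone controls only the large-$|z|$ regime. Making the truncation argument uniform in $z$ on compact sets, while simultaneously using $f_n\to 0$ on compacta in $\omega$, is the delicate technical step; everything else is a direct translation of the framework of Theorems 4.2 and 4.3.
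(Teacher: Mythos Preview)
Your approach matches the paper's almost exactly. The necessity direction is identical: test on $k_z$, use $\tilde{\varphi}_2(z)=\langle T_{\varphi}k_z,k_z\rangle\leq\|T_{\varphi}k_z\|_{\infty,\alpha}$, and then pass from $t=2$ to general $t$; the paper invokes Lemma~1.5 for this last step, which is precisely the ``$C_0$-analog of Lemma~1.3'' you describe.

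For the sufficiency direction you are actually more careful than the paper. The paper simply records the pointwise bound from Theorem~4.3,
\[
|T_{\varphi}f_n(z)e^{-\alpha|z|^2/2}|\leq 2\|f_n\|_{\infty,\alpha}\,\tilde{\varphi}_1(z),
\]
notes $\tilde{\varphi}_1\in C_0(\mathbb{C})$, and asserts $\|T_{\varphi}f_n\|_{\infty,\alpha}\to 0$ without further comment. You correctly observe that this bound alone only controls large $|z|$, and that a truncation in $\omega$ (using $f_n\to 0$ uniformly on compacta) is needed to handle $z$ in a fixed compact set. That truncation argument is exactly what the paper writes out in detail later in Theorem~4.7 for the BMO case, so your instinct and your outlined fix are both on target; the paper just suppressed this step in Theorem~4.4.
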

\begin{proof}
\begin{align}\nonumber
 0\leqslant \tilde{\varphi}_{2}(z)
&\leqslant <T_{\varphi} k_z,k_z >\\
    &\leqslant  ||T_{\varphi} k_z||_{\infty,\alpha} ||k_z||_{1,\alpha}\\
&\leqslant||T_{\varphi}k_z||_{\infty,\alpha}.
\end{align}
$T_{\varphi}$ is a compact operator, so $\lim\limits_{z\to \infty}||T_{\varphi}k_z||_{\infty,\alpha}=0$,\
then $ \tilde{\varphi}_{2}(z)\in C_0(\mathbb{C})$. By lemma 1.5, $\tilde{\varphi}_{t}(z)\in C_0(\mathbb{C})$ for any $t>0.$

On the contrary, we suppose $\{f_n\}$ is a bounded sequence in $F_{\alpha}^{\infty}$ and converges to $0$ uniformly on any compact subset of $\mathbb{C}$.
By Theorem 4.3
\begin{align}\nonumber
|T_{\varphi}f_n(z) e^{-\frac{\alpha|z|^2}{2}}|
&=|\int_{\mathbb{C} }\varphi(\omega) f_n(\omega)\overline{K_z(\omega)} \text{d}\lambda_\alpha(\omega) e^{-\frac{\alpha|z|^2}{2}}| \\
&\leqslant 2||f_n||_{\infty,\alpha} \tilde{\varphi}_{1}(z).
\end{align}
Because $\tilde{\varphi}_t\in C_0(\mathbb{C})$, then $ \tilde{\varphi}_{1}(z)\in C_0(\mathbb{C})$.
 Thus we know that $\lim\limits_{n\to \infty}||T_{\varphi}f_n||_{\infty,\alpha}=0$, then $T_{\varphi}$ is a compact operator on $F_{\alpha}^{\infty}$.\qedsymbol
\end{proof}
\begin{theorem}
 \textit{Suppose $\varphi \in BMO$. $ T_{\varphi}$ is a bounded operator on $F_{\alpha}^{\infty}$ if and only if $\tilde{\varphi}_{t}\in L^{\infty}(\mathbb{C},dA)$ for $t>0$..}
\end{theorem}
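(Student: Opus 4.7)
The plan is to adapt the proof of Theorem 4.3, keeping its two building blocks (reproducing-kernel testing for necessity and a pointwise Toeplitz estimate for sufficiency) and letting the BMO hypothesis play the role that positivity played there: it is what converts control of $\tilde{\varphi}_{t}$ (where cancellation is allowed) into control of $\widetilde{|\varphi|}_{t}$ (which is what the Toeplitz integral actually needs).

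For the necessary direction I would run the argument from Theorem 4.3 essentially verbatim. The reproducing-kernel identity
$$\tilde{\varphi}_{2}(z) \;=\; \langle T_{\varphi}k_{z},\, k_{z}\rangle$$
is insensitive to the sign or phase of $\varphi$, and the $F_{\alpha}^{\infty}$-$F_{\alpha}^{1}$ duality together with $\|k_{z}\|_{1,\alpha}=\|k_{z}\|_{\infty,\alpha}=1$ then yields $|\tilde{\varphi}_{2}(z)|\leqslant \|T_{\varphi}\|$, so that $\tilde{\varphi}_{2}\in L^{\infty}(\mathbb{C},dA)$. To upgrade this to $\tilde{\varphi}_{t}\in L^{\infty}$ for every $t>0$, I would invoke the $L^{\infty}$ counterpart of Lemma 1.5, also recorded in [1]: for a BMO symbol, the boundedness of $\tilde{\varphi}_{\beta}$ does not depend on the parameter $\beta>0$.

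For the sufficient direction, repeating the pointwise calculation from the proof of Theorem 4.3, but with $|\varphi|$ replacing $\varphi$, gives
$$\bigl|T_{\varphi}f(z)\, e^{-\alpha|z|^{2}/2}\bigr| \;\leqslant\; \|f\|_{\infty,\alpha}\,\widetilde{|\varphi|}_{1}(z),$$
so the whole question reduces to showing $\widetilde{|\varphi|}_{1}\in L^{\infty}(\mathbb{C},dA)$. This is where BMO intervenes: the standard Berezin characterization of BMO gives $\widetilde{|\varphi-\tilde{\varphi}|^{2}}\in L^{\infty}$, and combined with the assumption $\tilde{\varphi}_{2}\in L^{\infty}$ this yields $\widetilde{|\varphi|^{2}}\in L^{\infty}$. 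A Cauchy–Schwarz step on the Berezin integral then gives $\widetilde{|\varphi|}(z)\leqslant \widetilde{|\varphi|^{2}}(z)^{1/2}$, hence $\widetilde{|\varphi|}_{2}\in L^{\infty}$, and the $L^{\infty}$ analogue of Lemma 1.3 applied to the positive measure $|\varphi|\,dA$ extends the conclusion to $\widetilde{|\varphi|}_{t}\in L^{\infty}$ for every $t>0$, in particular for $t=1$.

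The main obstacle is the last paragraph's "remove the absolute value" step. Nothing in the assumption $\tilde{\varphi}_{t}\in L^{\infty}$ by itself prevents large values of $|\varphi|$ from cancelling one another inside the Berezin integral, so an additional hypothesis on $\varphi$ is unavoidable. BMO is precisely the condition that rules out such cancellation at the $L^{2}$ level, which is the minimal strength Cauchy–Schwarz needs to promote boundedness of $\tilde{\varphi}$ to boundedness of $\widetilde{|\varphi|}$. Once $\widetilde{|\varphi|}_{1}\in L^{\infty}$ is secured, the proof closes exactly as in Theorem 4.3, with $\|T_{\varphi}\|\leqslant C\,\|\widetilde{|\varphi|}_{1}\|_{\infty}$.
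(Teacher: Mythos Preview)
Your proposal is correct and follows essentially the same approach as the paper: both directions are handled identically (testing on $k_{z}$ for necessity, the pointwise Toeplitz estimate $|T_{\varphi}f(z)|e^{-\alpha|z|^{2}/2}\leqslant C\|f\|_{\infty,\alpha}\,\widetilde{|\varphi|}_{1}(z)$ for sufficiency), and in both proofs the BMO hypothesis enters exactly where you place it, namely to pass from $\tilde{\varphi}_{t}\in L^{\infty}$ to $\widetilde{|\varphi|}_{t}\in L^{\infty}$. The only cosmetic difference is that the paper invokes the $L^{1}$ mean-oscillation characterization of BMO (Theorem~3.34 of [1]) to obtain $0\leqslant\widetilde{|\varphi|}_{2}(z)-|\tilde{\varphi}_{2}(z)|\leqslant C$ in one stroke, whereas you reach the same conclusion via the $L^{2}$ characterization and a Cauchy--Schwarz step.
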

\begin{proof} Let $\varphi \in BMO$, then $\varphi$ satisfies the condition $(\uppercase\expandafter{\romannumeral1}_1)$ and
$\tilde{\varphi}_{t}\in L^{\infty}(\mathbb{C},dA)$. By theorem 3.34 in $[1]$,
$$||\varphi \circ \varphi_z-\tilde{\varphi}(z)||_{L^1(d\lambda_{\alpha})}\leqslant C,$$
where $\varphi_z(\omega)=z-\omega$.
Thus
    $$0\leqslant|| \varphi\circ \varphi_z ||_{L'(d\lambda_{\alpha})}-|\widetilde{\varphi}(z)|=\widetilde{ |\varphi|}_{2}(z)-|\widetilde{\varphi}_{2}(z)|\leq C.$$
Thus $\widetilde{\varphi}_{t}\in  L^{\infty}(\mathbb{C},\text{d}A )$ if and only if  $\widetilde{|\varphi|}_t\in  L^{\infty}(\mathbb{C},\text{d}A)$.
For any~$f\in F_{\alpha}^{\infty}$,
\begin{align}\nonumber
|T_{\varphi}f(z) e^{-\frac{\alpha|z|^2}{2}}|
&=|e^{-\frac{\alpha|z|^2}{2}}\int_{\mathbb{C} }\varphi(\omega) f(\omega)\overline{K_z(\omega)} \text{d}\lambda_{\alpha}(\omega)| \\
&=|\frac{\alpha}{\pi}\int_{\mathbb{C}} \varphi(\omega) f(\omega)\overline{K_z(\omega)}e^{-\alpha|\omega|^2 } e^{-\frac{\alpha|z|^2}{2}} \text{d}A(\omega) |\\
&\leqslant \frac{\alpha}{\pi} \int_{\mathbb{C}} |f(\omega)e^{-\frac{\alpha|\omega|^2}{2} }||\varphi(\omega)|
| \overline{K_z(\omega)} e^{-\frac{\alpha|\omega|^2}{2}-\frac{\alpha|z|^2}{2}}|\text{d}A(\omega)\\
&\leqslant \frac{\alpha}{\pi} ||f||_{\infty,\alpha} \int_{\mathbb{C}}|\varphi(\omega)|e^{-\frac{\alpha|z-\omega|^2}{2}}\text{d}A(\omega)\\
&\leqslant 2  ||f||_{\infty,\alpha} \widetilde{|\varphi|_1}(z).
\end{align}
Then~$||T_{\varphi}f(z)||_{\infty,\alpha}\leqslant 2|| ~\widetilde{|\varphi|_1}~||_{\infty}$, so ~$T_{\varphi}$~is a bounded operator on ~$F_{\alpha}^{\infty}$~.

On the contrary\ $T_{\varphi}$ is boundary, then
$$|\tilde{\varphi}_{2}(z)|=|<T_{\varphi}k_z,k_z>|\leqslant||T_{\varphi} k_z||_{\infty,\alpha}||k_z||_{1,\alpha}\leqslant||T_{\varphi}||.$$
This means
$\tilde{\varphi}_{2}\in L^{\infty}( \mathbb{C},\text{d}A)$, so $\tilde{\varphi}_{t}\in L^{\infty}( \mathbb{C},\text{d}A)$ for $t>0$.$\qedsymbol$
\end{proof}
\begin{theorem}
\textit{Let~$\varphi \in BMO$. $T_{\varphi}$ is a compact operator on $F_{\alpha}^{\infty}$, then ~$\tilde{\varphi}_{t}(z)\in C_0(\mathbb{C})$ for $t>0$.}
\end{theorem}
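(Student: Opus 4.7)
The plan is to imitate the necessity half of Theorem 4.4, using the normalized reproducing kernels $k_z$ as test functions, but with $\varphi \in BMO$ now complex-valued so we must handle the Berezin transform through its absolute value and then invoke Lemma 1.5 to pass between different values of $t$.

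First, I would record two preliminary facts. Since $T_\varphi$ is compact on $F_\alpha^\infty$, it is in particular bounded, so Theorem 4.5 tells us $\tilde{\varphi}_t \in L^{\infty}(\mathbb{C},\mathrm{d}A)$; in particular $\varphi$ satisfies $(\mathrm{I}_1)$ and all the integrals below converge. Also, $\|k_z\|_{p,\alpha}=1$ for every $1 \leqslant p \leqslant \infty$, and when $z \to \infty$ the family $\{k_z\}$ converges to $0$ uniformly on every compact subset of $\mathbb{C}$ while remaining bounded (indeed of norm $1$) in $F_\alpha^\infty$.

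Second, I would establish the key identity
\[
\tilde{\varphi}_2(z) \;=\; \langle T_{\varphi} k_z,\, k_z\rangle
\]
by writing $T_\varphi k_z = P(\varphi k_z)$ and using the self-adjointness of the projection $P$ and the reproducing property to get $\langle P(\varphi k_z),k_z\rangle = \int_{\mathbb{C}} \varphi(\omega) |k_z(\omega)|^2 \,\mathrm{d}\lambda_\alpha(\omega)$, which is exactly $\tilde{\varphi}_2(z)$. The same Hölder-type estimate used in the earlier theorems gives
\[
|\tilde{\varphi}_2(z)| \;\leqslant\; C\, \|T_{\varphi} k_z\|_{\infty,\alpha}\, \|k_z\|_{1,\alpha} \;=\; C\, \|T_{\varphi} k_z\|_{\infty,\alpha}.
\]

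Third, I would exploit compactness. For any sequence $z_n \to \infty$, the sequence $\{k_{z_n}\}$ is bounded in $F_\alpha^\infty$ and tends to $0$ uniformly on compact sets, so compactness of $T_\varphi$ forces $\|T_\varphi k_{z_n}\|_{\infty,\alpha} \to 0$. Combined with the bound above, this yields $\tilde{\varphi}_2(z_n) \to 0$ for every such sequence, which is precisely the statement $\tilde{\varphi}_2 \in C_0(\mathbb{C})$. Finally, since $\varphi \in BMO$, Lemma 1.5 promotes this to $\tilde{\varphi}_t \in C_0(\mathbb{C})$ for every $t>0$, completing the proof.

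The main obstacle I foresee is justifying the identity $\tilde{\varphi}_2(z) = \langle T_\varphi k_z, k_z\rangle$ for complex-valued $\varphi \in BMO$: one has to be sure that the duality pairing on $F_\alpha^\infty \times F_\alpha^1$ (rather than the $F_\alpha^2$ inner product used formally in Theorem 4.4) is what we apply, and that $T_\varphi k_z$ lies in $F_\alpha^\infty$ with $\|T_\varphi k_z\|_{\infty,\alpha} < \infty$ so the inequality $|\langle T_\varphi k_z, k_z\rangle| \leqslant C\|T_\varphi k_z\|_{\infty,\alpha}\|k_z\|_{1,\alpha}$ is legitimate. Both points follow from the preliminary observation that $T_\varphi$ is bounded on $F_\alpha^\infty$ (via Theorem 4.5); after that, the calculation runs exactly as in the second half of Theorem 4.4.
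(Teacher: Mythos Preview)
Your proposal is correct and follows essentially the same approach as the paper's proof: test against the normalized kernels $k_z$, use compactness to get $\|T_\varphi k_z\|_{\infty,\alpha}\to 0$ as $z\to\infty$, bound $|\tilde{\varphi}_2(z)|=|\langle T_\varphi k_z,k_z\rangle|\leqslant \|T_\varphi k_z\|_{\infty,\alpha}\|k_z\|_{1,\alpha}$, conclude $\tilde{\varphi}_2\in C_0(\mathbb{C})$, and then invoke Lemma~1.5. Your write-up is in fact more careful than the paper's in justifying boundedness (via Theorem~4.5) and the duality pairing, but the argument is the same.
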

\begin{proof}Let~$f_n=k_z$. $T_{\varphi}$ is compact on~$F_{\alpha}^{\infty}$~, so~$\lim\limits_{z\to  \infty}||T_{\varphi}k_z||_{\infty, \alpha}=0$, and
\begin{align}\nonumber
|\tilde{\varphi}_{2}(z)|
&=|<T_{\varphi}, k_z>| \\
&\leqslant ||T_{\varphi}k_z||_{\infty,\alpha}||k_z||_{1,\alpha}\\
&\leqslant ||T_{\varphi}k_z||_{\infty,\alpha}.
\end{align}
We can get~$\tilde{\varphi}_{2}(z)\in C_0(\mathbb{C})$, by Lemma 1.5,~$\tilde{\varphi}_{t}(z)\in C_0(\mathbb{C})$ for $t>0$. \qedsymbol

\begin{theorem}
\textit{Let $\varphi \in BMO$. If ~$\widetilde{|\varphi|_{t}}(z)\in C_0(\mathbb{C})$, then $T_{\varphi}$ is a compact operator on $F_{\alpha}^{\infty}$.}
\end{theorem}
\textit{Proof} Let~$\{f_n\}_{n=1}^{+\infty}$ be a bounded  sequence in $F_{\alpha}^{\infty}$ and converge  to $0$ uniformly on any compact subset of $\mathbb{C}$~,
 thus
\begin{align}\nonumber
|T_{\varphi}f_{n}(z)e^{-\frac{\alpha|z|^2}{2}}|
&=|e^{-\frac{\alpha|z|^2}{2}}\int_{\mathbb{C}}\varphi(\omega)
f_n(\omega)\overline{K_z(\omega)}\text{d}\lambda_{\alpha}(\omega)| \\
&\leqslant  e^{-\frac{\alpha|z|^2}{2}}\int_{|\omega| \leqslant R}|\varphi(\omega)f_n(\omega)K_z(\omega)| \text{d}\lambda_{\alpha}(\omega)\\
 & +\frac{\alpha}{\pi}\int_{|\omega| > R}|f_n(\omega)e^{-\frac{\alpha|\omega|^2}{2}}| |\varphi(\omega)|e^{-\frac{\alpha|z-\omega|^2}{2}} \text{d}A(\omega).
\end{align}
Because $\{f_n\}_{n=1}^{+\infty}$~ is a bounded  sequence in $F_{\alpha}^{\infty}$ and converge  to $0$ uniformly on any compact subset of $\mathbb{C}$~,
 so for any $\varepsilon>0$, there is a natural number $N$, such that when $n>N$, $|f_n(\omega)|<\varepsilon$ for all $\omega \in B(0,R)=\{\omega||\omega|\leq R\}, R>0$.
 For $\varphi \in BMO$, then $\varphi$ satisfies the condition $(\uppercase\expandafter{\romannumeral1}_1)$, i.e., there is a constant $C>0$
 such that $\int_{\mathbb{C}}|\varphi(\omega)||K_z(\omega)|\text{d}\lambda_{\alpha}(\omega)\leq C.$
 Then
 \begin{align}\nonumber
 & e^{-\frac{\alpha|z|^2}{2}}\int_{|\omega| \leqslant R}|\varphi(\omega)f_n(\omega)K_z(\omega)| \text{d}\lambda_{\alpha}(\omega)\\
 &< \varepsilon\int_{\mathbb{C}}|\varphi(\omega)||K_z(\omega)|\text{d}\lambda_{\alpha}(\omega)\\
 &\leq \varepsilon\int_{|\omega| \leq R}|\varphi(\omega)||K_z(\omega)|\text{d}\lambda_{\alpha}(\omega)\\
  &\leq C\varepsilon.
 \end{align}
 And
  \begin{align}\nonumber
  &\frac{\alpha}{\pi}\int_{|\omega| > R}|f_n(\omega)e^{-\frac{\alpha|\omega|^2}{2}}| |\varphi(\omega)|e^{-\frac{\alpha|z-\omega|^2}{2}} \text{d}A(\omega)\\
  &\leq \frac{\alpha}{\pi}||f_n||_{\infty,\alpha}\int_{\mathbb{C}}|\varphi(\omega)|e^{-\frac{\alpha|z-\omega|^2}{2}} \text{d}A(\omega)\\
  &\leq 2||f_n||_{\infty,\alpha}\widetilde{|\varphi|_{1}}(z).
  \end{align}
  Thus
  $|T_{\varphi}f_{n}(z)e^{-\frac{\alpha|z|^2}{2}}\leq C\varepsilon+2||f_n||_{\infty,\alpha}\widetilde{|\varphi|_{1}}(z).$
If $\widetilde{|\varphi|_{1}}(z)\in C_0(\mathbb{C})$, so $\lim\limits_{n\to +\infty}||T_{\varphi}f_n||_{\infty, \alpha}=0$, then $T_{\varphi}$ is a compact operator on $F_{\alpha}^{\infty}$.\qedsymbol

\end{proof}

\end{document}